\newcommand{\bX}{{\bf X}}
\newcommand{\bH}{{\bf H}}
\newcommand{\bY}{{\bf Y}}
\newcommand{\bT}{{\bf T}}
\newcommand{\HC}{{\textbf{H}_{\mathbb{C}}^2}}
\newcommand{\PU}{\textrm{PU}}
\newcommand{\X}{\mathbb{X}}
\newcommand{\calL}{\mathcal{L}}
\newcommand{\calB}{\mathcal{B}}
\newcommand{\calC}{\mathcal{C}}
\newcommand{\calH}{\mathcal{H}}
\newcommand{\calT}{\mathcal{T}}
\newcommand{\A}{\mathbb{A}}
\newcommand{\C}{\mathbb{C}}
\newcommand{\R}{\mathbb{R}}
\newcommand{\J}{\mathbb{J}}
\newcommand{\frakC}{\mathfrak{C}}
\newcommand{\frakF}{\mathfrak{F}}
\newcommand{\frakH}{\mathfrak{H}}
\newcommand{\frakV}{\mathfrak{V}}
\newcommand{\frakX}{\mathfrak{X}}
\newcommand{\fp}{\mathfrak{p}}
\newcommand{\bp}{{\bf p}}
\newcommand{\bz}{{\bf z}}
\newcommand{\bZ}{{\bf Z}}
\newcommand{\bW}{{\bf W}}
\newcommand{\bS}{{\bf S}}
\newtheorem{thm}{Theorem}[section]
\newtheorem{defi}[thm]{Definition}
\newtheorem{cor}[thm]{Corollary}
\newtheorem{prop}[thm]{Proposition}
\begin{document}
\title[$\PU(2, 1)$ configuration space of $4$ points]{A K\"ahler structure for the $\PU(2, 1)$ configuration space of four points in $S^3$}
\author{Ioannis D. Platis \and Li-jie Sun}
\thanks{Corresponding author: lijiesun@sdu.edu.cn.}
\thanks{2010 Mathematics Subject Classification: 53C17, 53C25,32C16.}
\thanks{Keywords: affine-rotational group, Sasakian manifolds, Complex hyperbolic plane, Configuration space.}

\address{Department of Mathematics and Applied Mathematics,
University of Crete,
 Heraklion Crete 70013,
 Greece.}
\email{jplatis@math.uoc.gr}
\address {School of Mathematics and Statistics, 
Shandong University, Weihai, 
Shandong 264209,
P. R. China.}
\email{lijiesun@sdu.edu.cn}

\begin{abstract}
We show that an open subset $\frakF_4''$ of the ${\rm PU}(2,1)$ configuration space of four points in $S^3$ is in bijection with an open subset of 
$\frakH^{\star}\times\R_{>0}$, where $\frakH^\star$ is the affine-rotational group. Since the latter is a Sasakian manifold, the cone $\frakH^\star\times\R_{>0}$ is K\"ahler and thus $\frakF_4''$ inherits this K\"ahler structure.
\end{abstract}
\maketitle

\section{Introduction}

Moduli spaces of $n$-tuples of points in the boundary of symmetric spaces of rank 1 and of non compact type (that is, hyperbolic spaces) are of great interest. Many times these spaces have remarkable geometric properties and they are interesting on their own. In other cases they appear in the study of deformations spaces of topological surfaces, where they serve as parameter spaces, see for instance \cite{PP0}. There, Fenchel-Nielsen type coordinates are established for the space of discrete, faithful and totally loxodromic representations of the fundamental group of a closed surface of negative Euler characteristic into ${\rm SU}(2,1)$ (that is, the triple cover of the holomorphic isometry group ${\rm PU}(2,1)$ of the complex hyperbolic plane $\bH^2_\C$). Incorporated into these coordinates are specific complex parameters, the {\it Kor\'anyi-Reimann complex cross-ratios}, see Section \ref{sec-X} for details; these parameters are directly related to the configuration space of four points in the boundary of $\bH^2_\C$. The boundary $\partial\bH^2_\C$ is in turn identified to $S^3$ or, to the one point compactification of the Heisenberg group. Recall that the Heisenberg group $\frakH$ is the Lie group with underlying manifold $\C\times\R$ and multiplication given by
$$
(z,t)\cdot(w,s)=\left(z+w,t+s+2\Im(z\overline{w})\right),
$$
for every $(z,t),(w,s)\in\C\times\R$; for details, see Section \ref{sec:heis}.

By $\frakC_4$ we shall denote the space of ordered quadruples $\fp=(p_1,p_2,p_3,p_4)$ of pairwise distinct points in $S^3$.  The group ${\rm PU}(2,1)$ acts diagonally on $\frakC_4$; we denote the quotient by $\frakF_4$: this is the {\it configuration space of four points in $S^3$} and it has been studied rather extensively, see for instance, \cite{CG,FP,PP1,Pla-CR}. We consider the following subsets of $\frakC_4$:
\begin{enumerate}
\item[{(i)}] The subset $\frakC^\R_4$ comprising quadruples $\fp$ such that $p_i$ do not all lie in the same $\C$-circle.
\item[{(ii)}] The subset $\frakC_4'\subset\frakC^\R_4$ comprising quadruples $\fp$ such that $p_2,p_3,p_4$ do not lie in the same $\C$-circle.
\item[{(iii)}] The subset $\frakC^\C_4\subset\frakC^\R_4$ comprising quadruples $\fp$ such that $p_1$ and $p_4$ do not lie in the same orbit of the stabiliser of $p_2$ and $p_3$.
\end{enumerate}
In \cite{FP} it has been proved that
\begin{enumerate}
\item[{(i)}] The subset $\frakF^\R_4$ is a 4-dimensional manifold.
\item[{(ii)}] The subset $\frakF_4'\subset\frakF^\R_4$ admits a {\rm CR} structure of codimension 2.
\item[{(iii)}] The subset $\frakF^\C_4\subset\frakF^\R_4$ is a 2-dimensional complex manifold, biholomorphic to $\C P^1\times(\C\setminus\R)$.
\end{enumerate}
All the above results are obtained via the identification of $\frakF^\R_4$ to the {\it cross-ratio variety}, see Section \ref{sec-X}. Platis also proved in \cite{Pla-CR} that  $\frakF^\C_4$ admits another complex structure. This complex structure and the complex structure in (iii) agree on the CR structure and they are opposite on its complement in the holomorphic tangent bundle of $\frakF_4^{\C}$.

In this paper, we consider a new subset $\frakC_4''$ of $\frakC^\C_4$ satisfying:
\begin{itemize}
\item
 $p_1,p_2,p_3$ as well as $p_2,p_3,p_4$ do not all lie in the same $\C$-circle.
\item $p_1$ and $p_4$ are not in the same orbit of the stabiliser of $p_2$ and $p_3$.
\end{itemize}
We wish to consider the geometric structures of the configuration space $\frakF_4''$. To this direction, we set $\fp=(p_1,p_2,p_3, p_4)\in\frakC_4''$. We may normalise so that 
$$
p_1=(1,\tan a),\quad p_2=\infty,\quad p_3=(0,0),\quad p_4=(z,t),
$$
where $a\in(-\pi/2,\pi/2)$, $z\neq 0$ and $t/|z|^2\neq\tan a$, see the details in Section \ref{sec-mainth}. 
Then we may define a map from 
$\frakF_4''$ to $\C_*\times\R\times\R_{>0}$, which maps $[\fp]$ to $(z,t,e^{\tan a})$.
The set $\C_{\ast}\times\R$ can be given the structure of a Lie group $\frakH^{\star}$, which is actually the {\it affine-rotational group}, see Section \ref{sec:hypheis}. The group $\frakH^{\star}$ can be endowed with a contact structure $\omega^\star$, see Proposition \ref{prop:omegasta}. With this contact structure, $\frakH^{\star}$ is a contact open submanifold of the Heisenberg group $\frakH$ with its natural contact form $\omega=dt+2xdy-2ydx.$ The strictly Levi pseudoconvex CR structure associated to  $\omega^{\star}$ is compatible to the strictly Levi pseudoconvex CR structure of the truncated boundary of complex hyperbolic plane, that is, $\bH^{\star}=\partial\bH^{2}_{\C}\setminus\partial\bH^{1}_{\C},$ see Section \ref{sec:con}. Using standard methods of contact Riemannian geometry, we show for clarity in Section \ref{sec:sashhg} that $\frakH^{\star}$ is Sasakian (Theorem \ref{thm:HstarSas}). This automatically implies that its Riemannian cone $\calC(\frakH^{\star})$ is a K\"ahler manifold, see Section \ref{sec:kcon}. We consider the subset $\calC'(\frakH^{\star})=\calC(\frakH^{\star})\setminus\{(z,t,r)\;|\;\log r=t/|z|^2\}$, which is an open subset of $\calC(\frakH^{\star})$.  In this manner we obtain our main theorem:

\medskip

\noindent{\bf Theorem \ref{thm-main}}
{\it  There is a bijection $\mathcal B_0: \frakF_4''\to\calC'(\frakH^{\star})$. Therefore $\frakF_4''$ inherits the K\"ahler structure of 
$\mathcal C'(\frakH^{\star})$. }

\medskip 

This paper is organised as follows: After the preliminaries in Section \ref{sec:prel}, we study the affine-rotational group $\frakH^\star$ and show its Sasakian structure in Section \ref{sec:hhg}; the K\"ahler structure of $\calC(\frakH^\star)$ is studied in Section \ref{sec:kcon}. In Section \ref{sec-conf} we prove our main result; the rest of the paper is devoted to describing the CR  structure and the complex structure of $\frakF_4''$ and showing that they are CR compatible with the respective structures that have been established in \cite{FP}.

\section{Preliminaries}\label{sec:prel}
In this section we review {\rm CR}, contact and Sasakian structures (Section \ref{sec:CRetc}), the basics about complex hyperbolic plane and its boundary (Section \ref{sec:chp}) and finally the Heisenberg group and its Sasakian geometry (Section \ref{sec:heis}).

\subsection{{\rm CR}, contact and  Sasakian structures}\label{sec:CRetc}
The material of this section is standard. We refer for instance to \cite{Be}, \cite{Bo-Ga}, for further details.

Let $M$ be a $(2p+s)$-dimensional real manifold. A codimension $s$ {\rm CR} structure in $M$ is a pair $(\calH, J)$ where $\calH$ is a $2p$-dimensional smooth subbundle of ${\rm T}(M)$ and $J$ is an almost complex endomorphism of $\calH$ which is formally integrable: If $X$ and $Y$ are sections of $\calH$ then the same holds for 
  $\left[X, Y\right]-\left[JX, JY\right], \left[JX, Y\right]+\left[X, JY\right]$ and moreover, 
  $J(\left[X, Y\right]-\left[JX, JY\right])=\left[JX, Y\right]+\left[X, JY\right]$.

If $s=1$, a contact structure on $M$ is a codimension 1 subbundle $\calH$ of ${\rm T}(M)$ which is completely non-integrable; alternatively, $\calH$ may be defined as the kernel of a 1-form $\eta$, called the contact form of $M$, such that $\eta\wedge (d\eta)^{p}\neq 0$. The dependence of $\calH$ on $\eta$ is up to multiplication of $\eta$ by a nowhere vanishing smooth function. By choosing an almost complex structure $J$ defined in $\calH$ we obtain a {\rm CR} structure $(\calH, J)$ of codimension 1 in $M$. The subbundle $\calH$ is also called the horizontal subbundle of ${\rm T}(M)$. The closed form $d\eta$ endows $\calH$ with a symplectic structure and we may demand from $J$ to be such that $d\eta(X,JX)>0$ for each $X\in\calH$; we then say that $\calH$ is strictly pseudoconvex. The Reeb vector field $\xi$ is the vector field which satisfies
$
\eta(\xi)=1$ and $\xi\in\ker(d\eta).
$
Note that the Reeb vector field is uniquely determined by the contact form.

Neat examples of contact structures on 3-dimensional  manifolds are the strictly pseudoconvex {\rm CR} structures on boundaries of domains in $\C^2$. Let $D\subset \C^2$ be a domain with defining function $\rho:D\to\R_{>0}$, $\rho=\rho(z_1,z_2)$. On the boundary $M=\partial D$ we consider the form $d\rho$; if $J$ is the complex structure of $\C^2$ we then let
$$
\eta=-\Im(\partial\rho)=-\frac{1}{2}Jd\rho.
$$
We thus obtain the {\rm CR} structure $(\calH=\ker(\eta),J)$.  This is a contact structure if and only if the Levi form 
$
L=d\eta=i\partial\overline{\partial}\rho
$
is positively oriented.

Let $(M, \eta)$ be a $(2p+1)$-dimensional pseudo-hermitian manifold equipped with a CR structure $(\calH=\ker(\eta),J).$ 
The almost complex structure $J$ on $\calH$ is then extended to an endomorphism $\phi$ of the whole tangent bundle ${\rm T}(M)$ by setting $\phi(\xi)=0$. Subsequently, a canonical Riemannian metric $g$ is defined in $M$ from the relations
\begin{equation}\label{eq:contactmetric}
\eta(X)=g(X,\xi),\quad \frac{1}{2}d\eta(X,Y)=g(\phi X, Y),\quad \phi^2(X)=-X+\eta(X)\xi,
\end{equation}
for all vector fields $X, Y$ on $M$.
We then call $(M;\eta,\xi,\phi,g)$ the contact Riemannian structure  on $M$ associated to the pseudo-hermitian structure $(M, \eta).$ If $f:M\to M$ is an automorphism which preserves the contact Riemannian structure, then one may use equations (\ref{eq:contactmetric}) to verify straightforwardly that this happens if and only if $f$ is {\rm CR}, that is $f_*J=Jf_*$ and also $f^{*}\eta=\eta$.

A contact Riemannian manifold for which the Reeb vector field $\xi$ is Killing (equivalently, $\xi$ is an infinitesimal {\rm CR} transformation) is called a  K-contact Riemannian manifold. 

Consider now the Riemannian cone $\calC(M)=(M\times\R_{>0},\;g_r=dr^2+r^2g)$. We may define an almost complex structure $\J$ in $\calC(M)$ by setting
$$
\J X=JX,\quad X\in\calH(M),\quad \J(r\partial_r)=\xi.
$$
The fundamental 2-form for $\calC(M)$ is then the exact form
$$
\Omega_r=d\left(\frac{r^2}{2}\eta\right)=r\;dr\wedge\eta+\frac{r^2}{2}\;d\eta,
$$
and therefore it is closed. We have then that $(M;\eta,\xi,\phi,g)$ is Sasakian if and only if the {\it Riemannian cone} $(\calC(M);\J,g_r,\Omega_r)$ is K\"ahler. The following proposition is often useful.
\begin{prop}\label{prop:S-c}
Let $(\eta,\xi,\phi,g)$ be a K-contact Riemannian structure on $M$. Then $M$ is a Sasakian manifold if and only if the contact Riemannian structure satisfies
$$
R(X,\xi)Y=g(X,Y)\xi-g(\xi,Y)X,
$$
for any vector fields $X,Y$ on $M$. Here
$R$ is the Riemannian curvature tensor of $g$.
\end{prop}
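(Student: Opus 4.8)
The plan is to route both implications through a single identity that holds on \emph{every} K-contact manifold, namely $(\nabla_X\phi)Y=R(X,\xi)Y$, where $\nabla$ and $R$ are the Levi-Civita connection and the curvature of $g$. Granting this identity, the proposition is immediate: a K-contact structure is Sasakian if and only if $(\nabla_X\phi)Y=g(X,Y)\xi-\eta(Y)X$, and substituting the identity turns this tensorial condition into $R(X,\xi)Y=g(X,Y)\xi-\eta(Y)X=g(X,Y)\xi-g(\xi,Y)X$, which is exactly the stated curvature relation (here I use $\eta(Y)=g(\xi,Y)$ from \eqref{eq:contactmetric}). Thus both directions fall out at once, and the work lies entirely in establishing the two ingredients.

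First I would record the covariant derivative of the Reeb field. Since $\xi$ is Killing on a K-contact manifold, $\nabla\xi$ is skew, so $d\eta(X,Y)=(\nabla_X\eta)(Y)-(\nabla_Y\eta)(X)=2g(\nabla_X\xi,Y)$; comparing with the normalisation $\tfrac12 d\eta(X,Y)=g(\phi X,Y)$ in \eqref{eq:contactmetric} yields $\nabla_X\xi=\phi X$. Differentiating this once more and cancelling the $\phi\nabla_XY$ terms gives
$$
\nabla_X\nabla_Y\xi-\nabla_{\nabla_XY}\xi=(\nabla_X\phi)Y.
$$
On the other hand, the standard second-order identity for a Killing field expresses the same left-hand side as $R(X,\xi)Y$ (the sign being pinned down, in the convention $R(X,Y)=\nabla_X\nabla_Y-\nabla_Y\nabla_X-\nabla_{[X,Y]}$, by the first Bianchi identity). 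Combining the two displays gives the key identity $(\nabla_X\phi)Y=R(X,\xi)Y$.

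The remaining ingredient is the tensorial characterisation of the Sasakian condition, i.e. that Kählerity of the cone $\calC(M)$ is equivalent to $(\nabla_X\phi)Y=g(X,Y)\xi-\eta(Y)X$. Since $\Omega_r$ is closed, the cone is Kähler precisely when $\bar\nabla\J=0$, where $\bar\nabla$ is the Levi-Civita connection of $g_r=dr^2+r^2g$. I would compute $\bar\nabla\J$ from the standard cone formulas $\bar\nabla_XY=\nabla_XY-rg(X,Y)\partial_r$, $\bar\nabla_{\partial_r}X=\bar\nabla_X\partial_r=r^{-1}X$ and $\bar\nabla_{\partial_r}\partial_r=0$ (for $X,Y$ tangent to $M$) together with $\J X=\phi X$ on $\ker\eta$, $\J(r\partial_r)=\xi$ and $\J\xi=-r\partial_r$. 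Sorting $(\bar\nabla_A\J)B=0$ by components, the pieces tangent to $M$ reproduce exactly $(\nabla_X\phi)Y=g(X,Y)\xi-\eta(Y)X$, while the components involving $\partial_r$ reproduce the K-contact identity $\nabla_X\xi=\phi X$ already in hand.

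I expect the genuine obstacle to be bookkeeping rather than ideas: the argument is sign-sensitive, and one must reconcile three conventions simultaneously---the factor $\tfrac12$ and the placement of $\phi$ in \eqref{eq:contactmetric}, the curvature sign convention, and the cone formulas---so that the two independent occurrences of $g(X,Y)\xi-\eta(Y)X$ (from the Sasakian identity and from the curvature relation) appear with matching signs. The one calculation that must be carried out with care is the last one, $\bar\nabla\J=0$; everything else reduces to the short Killing-field manipulation above.
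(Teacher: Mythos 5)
The paper itself gives no proof of Proposition \ref{prop:S-c}; it is quoted as a standard fact from the Sasakian literature (it is essentially the curvature characterisation in Blair or Boyer--Galicki), so there is no internal argument to compare yours against. Your route is the standard textbook one and its skeleton is sound: from $\tfrac12 d\eta(X,Y)=g(\phi X,Y)$ and the Killing property you correctly get $\nabla_X\xi=\phi X$, the second-order Killing identity then gives $(\nabla_X\phi)Y=R(X,\xi)Y$ on any K-contact manifold, and the cone computation converts ``$\calC(M)$ K\"ahler'' into a tensorial condition on $\nabla\phi$. Both ingredients are the right ones and the first is correctly executed.

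The sign reconciliation you anticipate is, however, not optional, and it bites in two places at once. With the paper's normalisation of $\phi$, the computation of $\bar\nabla\J=0$ on the cone yields $(\nabla_X\phi)Y=\eta(Y)X-g(X,Y)\xi$, \emph{not} $g(X,Y)\xi-\eta(Y)X$ as you wrote; you can test this against the paper's own example, where $\nabla_{\bX}\bT=\bY$ and $\Phi^\star\bT=0$ give $(\nabla_{\bX}\Phi^\star)\bT=-\Phi^\star\bY=\bX=\eta(\bT)\bX-g^\star(\bX,\bT)\bT$. Combining that corrected identity with your $(\nabla_X\phi)Y=R(X,\xi)Y$, valid in your convention $R(X,Y)=[\nabla_X,\nabla_Y]-\nabla_{[X,Y]}$, produces $R(X,\xi)Y=\eta(Y)X-g(X,Y)\xi$, the \emph{negative} of the displayed formula. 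The discrepancy is absorbed by the paper's own curvature convention, $R(X,Y)Z=\nabla_Y\nabla_X Z-\nabla_X\nabla_Y Z+\nabla_{[X,Y]}Z$ (see the proof of Theorem \ref{thm:HstarSas}), which is the negative of yours. So your two sign slips compensate and you land on the correct statement, but as written the intermediate identities are mutually inconsistent; once you either flip the sign in your Sasakian condition or adopt the paper's curvature convention throughout, the argument closes correctly.
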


\medskip

We wish to further comment on the sub-Riemannian geometry of a contact and a contact Riemannian manifold, respectively. If $(M, \eta)$ is a $(2p+1)$-dimensional pseudo-hermitian manifold equipped with a CR structure $(\calH=\ker(\eta),J)$, we may first define a Riemannian metric $g_{cc}$ in $\calH$ (the sub-Riemannian metric);  the distance $d_{cc}(p,q)$ between two points $p,q$ of $M$ is given by the infimum of the $g_{cc}$-length of horizontal curves joining $p$ and $q$. By a horizontal curve $\gamma$ we mean a piece-wise smooth curve in $M$ such that $\dot\gamma\in\calH$. The metric $d_{cc}$ is the Carnot-Carath\'eodory metric and there are two interesting facts about it: Firstly, the metric topology coincides with the manifold topology and secondly, if $g_{cc}'$ is another sub-Riemannain metric, then $d_{cc}$ and $d_{cc}'$ are bi-Lipschitz equivalent on compact subsets of $M$. In the case where we construct a contact Riemannian structure $(M;\eta,\xi,\phi,g)$ out of a pseudo-hermitian structure $(M, \eta)$ as above, the sub-Riemannian metric $g_{cc}$ may be taken as the restriction of $g$ into $\calH\times\calH$, i.e., $g=g_{cc}+\eta\otimes\eta$. If $d_g$ is the Riemannian distance corresponding to the Riemannian metric $g$ and $d_{cc}$ is the Carnot-Carath\'eodory distance corresponding to $g_{cc}$, then we always have $d_g\le d_{cc}$. It also follows that the group ${\rm Aut}(M)$ of automorphisms of the contact Riemannian structure $g$ is just the group ${\rm Isom}_{cc}(M)$ of isometries of $d_{cc}$. If the contact Riemannian structure is Sasakian, then the group ${\rm Aut}(\calC(M))$ of automorphisms of $\calC(M)$ is just ${\rm Isom}_{cc}(M)$.

\subsection{Complex hyperbolic plane}\label{sec:chp}
Let $\C^{2, 1}$ be a 3-dimensional $\C$-vector space equipped with a Hermitian form of signature $(2, 1)$.  For the purpose of our paper we shall work with the Hermitian form given by the matrix 
\begin{equation*}
H=\left(\begin{matrix}
  0 &\quad 0 &\quad 1\\
 0&\quad1&\quad0\\
  1&\quad0&\quad0
\end{matrix}\right).
\end{equation*}
Thus $\langle {\bf z, w}\rangle=\overline{{\bf w}}^{t}H{\bf z}=z_1\overline{w_3}+z_2\overline{w_2}+z_3\overline{w_1},$
where ${\bf z}=[z_1, z_2, z_3]^t$ and ${\bf w}=[w_1, w_2, w_3]^t$. 

Complex hyperbolic plane $\HC$ is the projectivisation in $\C^2$ of negative vectors in $\C^{2,1}$, that is, vectors $\bz$ such that $\langle\bz,\bz\rangle<0$. The resulting domain, described in coordinates $(z_1,z_2)$ of $\C^2$ by
\begin{equation}\label{eq:Sieg}
\rho(z_1,z_2)=2\Re(z_1)+|z_2|^2<0,
\end{equation}
is the \emph{Siegel domain} model for $\HC$. This is a K\"ahler manifold, and its K\"ahler metric is the Bergman metric.
The boundary $\partial\HC$ of complex hyperbolic plane is the projectivisation of null vectors of $\C^{2,1}$, that is, vectors $\bz$ such that $\langle\bz,\bz\rangle=0$. It is identified to the one point compactification of the boundary of the Siegel domain, that is $S^{3}$. On the other hand, the boundary of the Siegel domain is naturally identified to the 3-dimensional \emph{Heisenberg group} $\mathfrak{H}$. This is the set $\mathbb{C}\times\mathbb{R}$ with the group law
$$(\zeta_{1},t_{1})\cdot(\zeta_{2},t_{2})=
(\zeta_{1}+\zeta_{2},t_{1}+t_{2}+2\Im(\zeta_{1}
\overline{\zeta_{2}})).$$
Coordinates for $\mathfrak{H}$ are thus $(z,t)$ and therefore $\partial\HC$ may be viewed as the set comprising {\it standard lifts} of points $(z,t)\in\mathfrak{H}$, that is,
\begin{displaymath}
\left[ \begin{array}{ccc}
-|z|^2+it\\
\sqrt{2}\;z\\
1
\end{array} \right],
\end{displaymath}
and the point at infinity $\infty$ which shall be
$[1,0,0]^{t}$.

Any totally geodesic subspace in $\HC$ is one of the the following types:
\begin{enumerate}
\item[{(i)}] Complex geodesic, which is an isometrically embedded copy of $\textbf{H}^{1}_{\mathbb{C}}$. It has the Poincar\'e model of hyperbolic geometry with constant curvature $-1$;
\item[{(ii)}] Totally real Lagrangian plane, which is an isometrically embedded copy of $\textbf{H}^{2}_{\mathbb{R}}$. It has the Beltrami-Klein projective model with constant curvature $-1/4$.
\end{enumerate}

The intersection of a complex geodesic $L$ with $\partial \HC$ is called a \emph{$\mathbb{C}$-circle}. Correspondingly, the intersection of a totally real Lagrangian plane with $\partial\HC$ is called an \emph{$\mathbb{R}$-circle}.
For more details we refer for instance to \cite{G}.

\subsection{Heisenberg group}\label{sec:heis}
The Heisenberg group $\frakH$ is a 2-step nilpotent Lie group. Consider the left-invariant vector fields
\begin{eqnarray*}
X=\frac{\partial}{\partial x}+2y\frac{\partial}{\partial t},\quad Y=\frac{\partial}{\partial y}-2x\frac{\partial}{\partial t},
\quad T=\frac{\partial}{\partial t}.
\end{eqnarray*}
The vector fields $X,Y,T$ form a basis for the Lie algebra $\mathfrak{h}$ of $\frakH$; this has a grading $\mathfrak{h} = \mathfrak{v}_1\oplus \mathfrak{v}_2$ with
\begin{displaymath}
\mathfrak{v}_1 = \mathrm{span}_{\R}\{X, Y\}\quad \text{and}\quad \mathfrak{v}_2=\mathrm{span}_{\R}\{T\}.
\end{displaymath} 
It is well known that Heisenberg group $\frakH$ admits a strictly pseudoconvex CR structure with contact form $\omega=d t+ 2 \Im(\bar{z}dz)$, and the Reeb vector field for $\omega$ is $T$. Following the strategy described in Section \ref{sec:CRetc}, one may define a contact Riemannian structure on $\frak{H}$. 
The endomorphism $\Phi$ of ${\rm T}(\frakH)$ is given by
$$
\Phi(X)=JX=Y,\quad\Phi(Y)=JY=-X,\quad\Phi(T)=0,
$$
and the Riemannian tensor for $g$ may be written in Cartesian coordinates as
\begin{equation*}\label{eq:gH}
g=g_{cc}+\omega^2=dx^2+dy^2+(dt+2x\;dy-2y\;dx)^2.
\end{equation*}
It can be then shown that $(\frakH;\omega,T,\Phi,g)$ is Sasakian, see for instance \cite{Bo}.

\section{The affine-rotational group and its Sasakian structure}\label{sec:hhg}
In Section \ref{sec:hypheis} we identify the affine-rotational group ${\rm Aff}(\R)\times{\rm U}(1)$ with a subset $\frakH^\star$ of the Heisenberg group $\frakH$. A strictly pseudoconvex CR structure on $\frakH^\star$ is inherited by its identification of $\frakH^\star$ with the boundary of the truncated complex hyperbolic plane $\bH^{\star}$, see Section \ref{sec:con}. We identify the Sasakian structure of $\frakH^\star$ in Section \ref{sec:sashhg}; this structure is actually equivalent to the Sasakian structure of the unit tangent bundle of the hyperbolic plane.  Finally, in Section \ref{sec:kcon} we describe in detail the K\"ahler structure of the Riemannian cone $\calC(\frakH^\star)=\frakH^\star\times\R_{>0}$. 

\subsection{Affine-rotational group}\label{sec:hypheis}
\begin{defi}
We define the group $\frakH^\star$ to be $\C_*\times\R$ with multiplication rule
$$
(z,t)\star(w,s)=(zw,\;t+s|z|^2).
$$
\end{defi}
One verifies straightforwardly that $\frakH^\star$ is a non-Abelian group; the unit element of $\frakH^\star$ is (1,0) and the inverse of an arbitrary $(z,t)\in\frakH^\star$ is $(1/z,\;-t/|z|^2)$. The group $\frakH^\star$ is a Lie group with underlying manifold $\C_*\times\R$; indeed, the map
\begin{eqnarray*}
&&
\frakH^\star\times\frakH^\star\ni\left((z,t),\;(w,s)\right)\mapsto (z,t)^{-1}\star (w,s)=\left(\frac{w}{z},\;\frac{-t+s}{|z|^2}\right)\in\frakH^\star,
\end{eqnarray*}
is clearly smooth.

Note that we can identify $\frakH^\star$ with the group product ${\rm Aff}(\R)\times{\rm U}(1)$, where 
$$
{\rm Aff}(\R)=\left\{\left[\begin{matrix}
\alpha&\beta\\
0&1\end{matrix}\right],\quad \alpha>0,\;\beta\in\R\right\}
$$
is the orientation preserving affine group of the real line. The identification is via the map $\psi:\frakH^\star\to {\rm Aff}(\R)\times{\rm U}(1)$ by
$$
\psi (z,t)=\left(\left[\begin{matrix}
|z|^2\quad&t\\
0\quad&1\end{matrix}\right],\;e^{i\arg z}\right),\quad(z,t)\in\frakH^\star.
$$
One can check straightforwardly that $\psi$ is a Lie group diffeomorphism. 

We next consider the set
$$
\bH^{\star}=\partial\bH^2_\C\setminus\partial\bH^1_\C.
$$
The set $\bH^{\star}$ comprises points $(z_1,z_2)$ of $\C^2$ such that
$$
\rho^{\star}(z_1,z_2)=\frac{2\Re(z_1)}{|z_2|^2}+1=\frac{\rho(z_1,z_2)}{|z_2|^2}=0,
$$
where $\rho$ is the defining function of $\bH^2_\C$ as in (\ref{eq:Sieg}). The bijection $\Psi:\frakH^{\star}\to\bH^{\star}$ is given by
$$
\Psi(z,t)=(-|z|^2+it,\sqrt{2}z),\quad (z,t)\in\frakH^\star
$$
is a diffeomorphism. In conclusion, we have the following
\begin{prop}
The group $\frakH^\star$ is identified to the affine-rotational group ${\rm Aff}(\R)\times{\rm U}(1)$ and to the set $\bH^\star$.
\end{prop}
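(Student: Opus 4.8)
The statement packages two logically independent identifications, so the plan is to verify each of the explicitly given maps in turn: first that $\psi$ is a Lie group isomorphism onto ${\rm Aff}(\R)\times{\rm U}(1)$, and then that $\Psi$ is a diffeomorphism onto $\bH^\star$. In both cases the maps are already written down, so nothing needs to be constructed; the work is entirely the verification of the homomorphism property, of bijectivity, and of smoothness in both directions.

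For $\psi$ I would first check that it intertwines the two group laws by a direct computation. Multiplying the affine matrices gives
$$
\left[\begin{matrix} |z|^2 & t \\ 0 & 1\end{matrix}\right]\left[\begin{matrix} |w|^2 & s \\ 0 & 1\end{matrix}\right]=\left[\begin{matrix} |z|^2|w|^2 & |z|^2 s+t \\ 0 & 1\end{matrix}\right],
$$
while the ${\rm U}(1)$ factors multiply to $e^{i(\arg z+\arg w)}=e^{i\arg(zw)}$. Comparing with the $\star$-product $(z,t)\star(w,s)=(zw,\,t+s|z|^2)$ shows at once that $\psi((z,t)\star(w,s))=\psi(z,t)\cdot\psi(w,s)$, so $\psi$ is a homomorphism. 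For bijectivity I would exhibit the inverse explicitly: given $\left(\left[\begin{smallmatrix}\alpha&\beta\\0&1\end{smallmatrix}\right],e^{i\theta}\right)$ with $\alpha>0$, set $z=\sqrt{\alpha}\,e^{i\theta}$ and $t=\beta$. Smoothness of $\psi$ is immediate, and smoothness of the inverse follows once one writes the ${\rm U}(1)$ component as $e^{i\arg z}=z/|z|$, which is smooth on $\C_*$ and sidesteps any branch issue with $\arg z$.

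For $\Psi$ I would first verify that the image lands in $\bH^\star$: substituting $z_1=-|z|^2+it$ and $z_2=\sqrt{2}z$ gives $2\Re(z_1)+|z_2|^2=-2|z|^2+2|z|^2=0$, and since $z\in\C_*$ we have $z_2\neq 0$, so $\rho^\star$ is well defined and vanishes. Bijectivity again follows by writing the inverse, $(z_1,z_2)\mapsto\bigl(z_2/\sqrt{2},\,\Im(z_1)\bigr)$, where the defining relation $2\Re(z_1)=-|z_2|^2$ guarantees that the preimage reproduces $z_1=-|z|^2+it$; both $\Psi$ and this inverse are manifestly smooth.

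I do not anticipate a genuine obstacle, as everything reduces to elementary algebra; the only point deserving care is the smoothness of the ${\rm U}(1)$ coordinate under $\psi$ and its inverse, which is best handled through the identification $e^{i\arg z}=z/|z|$ rather than by treating $\arg z$ directly.
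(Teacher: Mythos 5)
Your proposal is correct and follows the same route as the paper, which simply exhibits the maps $\psi$ and $\Psi$ and asserts that the verifications are straightforward; you carry out exactly those verifications (homomorphism property, explicit inverses, smoothness via $e^{i\arg z}=z/|z|$). All computations check out, including the point that the defining relation $2\Re(z_1)=-|z_2|^2$ makes your formula for $\Psi^{-1}$ a genuine two-sided inverse.
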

From now on we shall refer to $\frakH^\star$ as the affine-rotational group. 
\subsection{Contact structure}\label{sec:con}
We may first endow $\frakH^\star$ with a strictly pseudoconvex CR structure which comes from the one of $\bH^\star$ that is described in the following:
\begin{prop}
There is a strictly pseudoconvex CR structure on $\bH^{\star}$. 
\end{prop}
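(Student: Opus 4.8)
The plan is to exploit the construction already spelled out at the end of Section~\ref{sec:CRetc}, where a strictly pseudoconvex CR structure is produced on the boundary of a domain in $\C^2$ from its defining function. Since $\bH^\star$ is precisely the zero set of $\rho^\star(z_1,z_2)=\bigl(2\Re(z_1)+|z_2|^2\bigr)/|z_2|^2$, and this is a smooth defining function away from $\partial\bH^1_\C$ (where $z_2=0$), I would set
\begin{equation*}
\eta^\star=-\Im(\partial\rho^\star)=-\tfrac12 Jd\rho^\star,
\end{equation*}
with $J$ the standard complex structure of $\C^2$, and define $\calH=\ker(\eta^\star)$. The claim then reduces to checking that the associated Levi form $L=d\eta^\star=i\partial\overline\partial\rho^\star$ is positively oriented on $\bH^\star$, which by the recipe of Section~\ref{sec:CRetc} is exactly the condition guaranteeing that $(\calH,J)$ is a strictly pseudoconvex CR (equivalently contact) structure.

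First I would compute $\partial\rho^\star$ and $\overline\partial\rho^\star$ explicitly in the coordinates $(z_1,z_2)$, keeping in mind the rescaling by $|z_2|^2$ that distinguishes $\rho^\star$ from $\rho$. The key observation is that on the zero set $\{\rho=0\}$ one has $\rho^\star=\rho/|z_2|^2$, so the two functions vanish simultaneously and their differentials are proportional along $\bH^\star$: indeed $d\rho^\star=\tfrac1{|z_2|^2}\,d\rho-\tfrac{\rho}{|z_2|^4}\,d|z_2|^2$, and the second term vanishes on $\bH^\star$ since $\rho=0$ there. Hence $\eta^\star$ and the contact form $\eta$ arising from $\rho$ differ only by the positive smooth factor $1/|z_2|^2$ on $\bH^\star$, and the horizontal distribution $\calH$ they define is identical. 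Because the CR structure on the full Siegel boundary $\partial\bH^2_\C$ (the Heisenberg group, with its contact form $\omega$) is already known to be strictly pseudoconvex, as recorded in Section~\ref{sec:heis}, restricting to the open subset $\bH^\star=\partial\bH^2_\C\setminus\partial\bH^1_\C$ preserves strict pseudoconvexity: an open submanifold of a strictly pseudoconvex CR manifold inherits the same CR structure.

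Concretely, I would verify that $d\eta^\star(V,JV)>0$ for every nonzero horizontal $V\in\calH$. Since multiplying the contact form by a positive function $f=1/|z_2|^2$ replaces $d\eta^\star|_\calH$ by $f\,d\eta|_\calH$ plus a term involving $df\wedge\eta$ that annihilates $\calH$, the sign of the Levi form is unchanged and positivity follows directly from the known positivity of the Levi form of $\omega$ on $\partial\bH^2_\C$. This is the cleanest route: rather than recomputing $i\partial\overline\partial\rho^\star$ from scratch, I reduce the statement to the already-established strict pseudoconvexity of the Heisenberg contact structure via the conformal-rescaling relation between $\rho^\star$ and $\rho$.

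The main obstacle is bookkeeping at the boundary $z_2=0$: one must confirm that $\rho^\star$ is a genuine smooth defining function on all of $\bH^\star$ and that the removal of $\partial\bH^1_\C$ is exactly what makes $|z_2|^2$ bounded away from zero, so that the rescaling factor $1/|z_2|^2$ is smooth and strictly positive there. I expect no essential difficulty beyond this, since strict pseudoconvexity is a local, open condition and is manifestly inherited by the open submanifold $\bH^\star$; the conformal factor argument then upgrades the restricted CR structure to the contact form $\eta^\star$ tailored to $\frakH^\star$ in the next subsection.
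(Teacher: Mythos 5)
Your argument is correct, but it takes a genuinely different route from the paper. The paper proves strict pseudoconvexity by direct computation: it writes out $\partial\rho^{\star}$ and $\bar{\partial}\rho^{\star}$, exhibits the explicit $(1,0)$ generator $Z=-|z_2|^2\frac{\partial}{\partial z_1}+z_2\frac{\partial}{\partial z_2}$ of the CR structure, computes $\partial\bar{\partial}\rho^{\star}=-\frac{\overline{z_2}}{|z_2|^4}dz_2\wedge d\overline{z_1}$, and evaluates the Levi form on $(Z,\bar Z)$ to get the value $1>0$. You instead use the conformal-rescaling relation $\rho^{\star}=\rho/|z_2|^2$: on the zero set the differentials are proportional with positive factor $1/|z_2|^2$, so $\eta^{\star}$ and the Heisenberg contact form define the same horizontal distribution, and since $d\eta^{\star}|_{\calH\times\calH}=f\,d\eta|_{\calH\times\calH}$ for the positive function $f=1/|z_2|^2$ (the $df\wedge\eta$ term annihilating $\calH$), positivity of the Levi form is inherited from the known strict pseudoconvexity of $\frakH$. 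Both arguments are sound. Your route is more conceptual, avoids recomputing $i\partial\bar{\partial}\rho^{\star}$, and makes transparent the relation $\omega^{\star}=\omega/(2|z|^2)$ that the paper only records afterwards in Proposition \ref{prop:omegasta}; the price is that you do not produce the explicit generator $Z$ of the CR structure, which the paper's computation hands over for free and which is used repeatedly later (e.g.\ in defining $\bZ=\Psi^{-1}_{*}(Z)$ and in the CR-equivalence arguments of Section \ref{sec-CReq}). One small point of bookkeeping: you should state explicitly that the proportionality $d\rho^{\star}=\frac{1}{|z_2|^2}d\rho$ holds only \emph{on} the zero set $\{\rho=0\}$ (the correction term carries a factor of $\rho$), which is all that is needed since the contact form is only ever restricted to $\bH^{\star}$; and note that the removal of $\partial\bH^1_{\C}$ is precisely what keeps $|z_2|$ nonzero, as you observe.
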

\begin{proof}
It follows from $\rho^{\star}(z_1,z_2)=\frac{\rho(z_1,z_2)}{|z_2|^2}$ that
$$\partial \rho^{\star}=\frac{dz_1+\overline{z_2}dz_2}{|z_2|^2},\qquad 
\bar{\partial} \rho^{\star}=\frac{d\overline{z_1}+z_2d\overline{z_2}}{|z_2|^2}.$$
A CR structure is defined by the (1, 0) vector field
$Z=-|z_2|^2\frac{\partial}{\partial z_1}+z_2\frac{\partial}{\partial z_2}$, and direct calculation yields to
$$\partial\bar{\partial} \rho^{\star}=-\frac{\overline{z_2}}{|z_2|^4}dz_2\wedge d\overline{z_1},$$ which indicates that 
$$
\partial\bar{\partial}{ \rho}^{\star}(Z, \bar{Z})=-\frac{\overline{z_2}}{|z_2|^4}dz_2\wedge d\overline{z_1}\left(-|z_2|^2\frac{\partial}{\partial z_1}+z_2\frac{\partial}{\partial z_2}, -|z_2|^2\frac{\partial}{\partial \overline{z_1}}+\overline{z_2}\frac{\partial}{\partial \overline{z_2}}\right)\\
=1>0.
$$
Therefore the Levi form is positively oriented on the CR structure.
The contact form is
$$
\eta^{\star}=\Im(\partial \rho^{\star})=\frac{dy_1+\Im(\overline{z_2}dz_2)}{|z_2|^2}.
$$
The proof is complete.
\end{proof}
Using the map $\Psi$ and the previous proposition we immediately have:
\begin{prop}\label{prop:omegasta}
There exists a strictly pseudoconvex CR structure on $\frakH^\star$. The CR structure is given by the vector field
$$
\bZ=\Psi^{-1}_{*}(Z)=z\frac{\partial}{\partial z}+i|z|^2\frac{\partial}{\partial t}
$$
and the contact form $\omega^\star$ by
$$
\omega^\star=\Psi^{*}\eta^{\star}=\frac{\omega}{2|z|^2},
$$
where $\omega$ is the contact form of the Heisenberg group $\frak H$.
\end{prop}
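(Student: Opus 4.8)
The plan is to transport the CR structure of $\bH^\star$ constructed in the previous proposition to $\frakH^\star$ through the diffeomorphism $\Psi$, and then to make the transported horizontal field and contact form explicit. The existence assertion will be essentially free: since $\Psi\colon\frakH^\star\to\bH^\star$ is a diffeomorphism and a CR structure is a pair $(\calH,J)$ subject to the bracket identities of Section \ref{sec:CRetc} that are natural under pushforward, the data $\calH^\star:=\Psi^{-1}_*\calH$ and $J^\star:=\Psi^{-1}_*\circ J\circ\Psi_*$ automatically satisfy those integrability conditions. Likewise strict pseudoconvexity is preserved: setting $\omega^\star=\Psi^*\eta^\star$ and using that $d$ commutes with pullback, one has $d\omega^\star(\bX,J^\star\bX)=(\Psi^*d\eta^\star)(\bX,J^\star\bX)=d\eta^\star(\Psi_*\bX,J\,\Psi_*\bX)>0$ for every nonzero horizontal $\bX$, because $\Psi_*(J^\star\bX)=J\,\Psi_*\bX$ by definition of $J^\star$. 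Thus only the two explicit formulas require genuine computation.

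For the horizontal field I would verify directly that $\Psi_*\bZ=Z$, which is equivalent to the claimed identity $\bZ=\Psi^{-1}_*Z$. Writing $z=x+iy$ and using the coordinate expressions $z_1=-|z|^2+it$, $z_2=\sqrt2\,z$ of $\Psi$, I would apply the chain rule in the Wirtinger calculus to a test function $f=f(z_1,z_2,\overline{z_1},\overline{z_2})$: the relevant derivatives are $\partial_z z_1=-\overline z$, $\partial_z\overline{z_1}=-\overline z$, $\partial_z z_2=\sqrt2$, together with $\partial_t z_1=i$ and $\partial_t\overline{z_1}=-i$. Substituting these into $\bZ(f\circ\Psi)=z\,\partial_z(f\circ\Psi)+i|z|^2\,\partial_t(f\circ\Psi)$ and collecting terms, the $f_{\overline{z_1}}$ contributions cancel and one is left with $-2|z|^2 f_{z_1}+\sqrt2\,z\,f_{z_2}$. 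Since at the image point $|z_2|^2=2|z|^2$ and $z_2=\sqrt2\,z$, this equals $\bigl(-|z_2|^2\partial_{z_1}+z_2\partial_{z_2}\bigr)f=Z(f)$, as desired.

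For the contact form I would simply pull back $\eta^\star=\dfrac{dy_1+\Im(\overline{z_2}\,dz_2)}{|z_2|^2}$. Under $\Psi$ one has $y_1=t$, hence $dy_1=dt$; and $z_2=\sqrt2\,z$ gives $dz_2=\sqrt2\,dz$, so $\overline{z_2}\,dz_2=2\,\overline z\,dz$ and $\Im(\overline{z_2}\,dz_2)=2\Im(\overline z\,dz)=2x\,dy-2y\,dx$. Combined with $|z_2|^2=2|z|^2$, this yields
$$
\Psi^*\eta^\star=\frac{dt+2\Im(\overline z\,dz)}{2|z|^2}=\frac{\omega}{2|z|^2},
$$
which is exactly the stated form $\omega^\star$, with $\omega=dt+2x\,dy-2y\,dx$ the Heisenberg contact form.

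The computations are elementary, so the only point demanding care—the main, and rather minor, obstacle—is bookkeeping: keeping the direction of transport straight (pushforward by $\Psi^{-1}$ for the vector field versus pullback by $\Psi$ for the form), and handling the mixed holomorphic/real coordinate $t$ correctly in the Wirtinger chain rule so that the antiholomorphic terms cancel. Once $\Psi_*\bZ=Z$ is checked in this way, nothing further is needed: integrability and strict pseudoconvexity come for free from the transport, and the formula $\omega^\star=\omega/(2|z|^2)$ simultaneously records that $\frakH^\star$ is a contact open submanifold of $\frakH$ whose contact form is a positive rescaling of $\omega$.
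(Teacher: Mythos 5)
Your proposal is correct and follows exactly the route the paper intends: the paper states this proposition with only the remark that it follows "immediately" from the diffeomorphism $\Psi$ and the preceding proposition on $\bH^\star$, and your computation of $\Psi_*\bZ=Z$ via the Wirtinger chain rule and of $\Psi^*\eta^\star=\omega/(2|z|^2)$ is precisely the verification being left to the reader. The details check out, including the cancellation of the $f_{\overline{z_1}}$ terms and the substitutions $|z_2|^2=2|z|^2$, $dy_1=dt$.
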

Note that
$$
\bZ=zZ,\quad \overline{\bZ}=\overline{z}\overline{Z},
$$
where $Z=(1/2)(X-iY)$, $\overline{Z}=(1/2)(X+iY)$, and $X,Y$ are the generators of the CR structure of $\frakH$, see Section \ref{sec:heis}. By setting 
$$
\bZ=\frac{1}{2}(\bX-i\bY),\quad\overline{\bZ}=\frac{1}{2}(\bX+i\bY),
$$ 
we have that the CR structure $\calH^\star=\ker(\omega^\star)$ is generated by the vector fields 
\begin{equation}\label{eq:basis}
\bX=x\frac{\partial}{\partial x}+y\frac{\partial}{\partial y},
 \quad
 \bY=x\frac{\partial}{\partial y}-y\frac{\partial}{\partial x}-2|z|^2\frac{\partial}{\partial t},
\end{equation}
and also, if $J$ is the alsmost complex structure in $\calH^*$ then
$$
J\bX=\bY,\quad J\bY=-\bX.
$$ 
By writing 
$$
\omega^*=\frac{dt}{2|z|^2}+d\arg z,
$$
we obtain
$$
d\omega^*=2\left(\frac{d(|z|^2)}{2|z|^2}\right)\wedge \left(-\frac{dt}{2|z|^2}\right).
$$
The vector field
\begin{equation}
 \bT=x\frac{\partial}{\partial y}-y\frac{\partial}{\partial x}
\end{equation}
is the Reeb vector field for $\omega^\star$: $\bT\in\ker(d\omega^\star)$ and $\omega^\star(\bT)=1$. The left-invariant vector fields $\bX$, $\bY$, $\bT$ form a basis for the tangent space and the only non-trivial Lie bracket relation is
$$
[\bX,\bY]=2(\bY-\bT).
$$
The dual basis to $\{\bX,\bY,\bT\}$ is $\{\phi^*,\psi^*,\omega^\star\}$,
where
\begin{equation}\label{eq:basis-cotr}
\phi^\star=\frac{d(|z|^2)}{2|z|^2},\quad \psi^\star=-\frac{dt}{2|z|^2}.
\end{equation}
In this basis,
$$
d\phi^\star=0,\quad d\psi^\star=-2\;\phi^*\wedge\psi^*,\quad d\omega^\star=2\;\phi^\star\wedge\psi^\star.
$$
For the symplectic form $d\omega^\star$ we have
$$
d\omega^\star(\bX,\bY)=2,\quad d\omega^\star(\bX,\bT)=d\omega^\star(\bY,\bT)=0.
$$
Finally, if 
 $dm$ is the Haar measure for $\frakH^\star$, then 
$$
\omega^\star\wedge d\omega^\star=\frac{dt\wedge dx\wedge dy}{|z|^4}=-dm.
$$
\subsection{Sasakian structure}\label{sec:sashhg}
A contact Riemannian structure on $\frakH^\star$ is established as follows: Let $\{\bX,\bY,\bT\}$ be the basis for the tangent space  and let also 
$\{\phi^\star,\psi^\star,\omega^\star\}$ be the dual basis as in (\ref{eq:basis-cotr}). By using equations (\ref{eq:contactmetric}) we verify straightforwardly that the Riemannian metric which we shall denote by $g^{\star}$ obtained out of the endomorphism of the tangent space (we shall denote by $\Phi^*$) can be given by declaring the basis $\{\bX,\bY,\bT\}$ orthonormal. Hence, 
$(\frakH^\star;\omega^\star,\bT,\Phi^\star,g^\star)$ is  a contact Riemannian manifold.
The Riemannian tensor is also written as
\begin{equation}\label{eq-Riem}
g^\star=ds^2=(\phi^\star)^2+(\psi^\star)^2+(\omega^\star)^2=\frac{(d(|z|^2))^2+dt^2}{4|z|^4}+\frac{(dt+2xdy-2ydx)^2}{4|z|^4}.
\end{equation}
The restriction of $g^\star$ into $\calH^\star=\{\bX,\bY\}$, that is,
$$
g^\star_{cc}=(\phi^\star)^2+(\psi^\star)^2=\frac{(d(|z|^2))^2+dt^2}{4|z|^4},
$$
defines the K\"ahler structure on the horizontal tangent bundle $\calH^\star$. {Note that  $g^\star_{cc}$ also can be obtained as the symmetric 2-form from the hyperbolic metric via the map $(z,t)\mapsto -|z|^2+it$  (compare with \cite{G}, Section 4.3.6}). We shall prove:
\begin{thm}\label{thm:HstarSas}
$(\frakH^\star;\omega^\star,\bT,\Phi^\star,g^\star)$ is Sasakian.
\end{thm}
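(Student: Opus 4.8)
The plan is to invoke Proposition \ref{prop:S-c}, which reduces the Sasakian property to two verifications: that $(\frakH^\star;\omega^\star,\bT,\Phi^\star,g^\star)$ is K-contact, and that its Riemannian curvature satisfies $R(X,\bT)Y=g^\star(X,Y)\bT-g^\star(\bT,Y)X$ for all $X,Y$. Since $g^\star$ is defined by declaring the left-invariant frame $\{\bX,\bY,\bT\}$ orthonormal and $\Phi^\star$ acts by $\Phi^\star\bX=\bY$, $\Phi^\star\bY=-\bX$, $\Phi^\star\bT=0$, every computation is purely algebraic: it is governed by the single nontrivial bracket $[\bX,\bY]=2(\bY-\bT)$ together with $[\bX,\bT]=[\bY,\bT]=0$, which I would record at the outset.

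First I would compute the Levi-Civita connection $\nabla$ of $g^\star$ in the frame $\{\bX,\bY,\bT\}$ through Koszul's formula $2g^\star(\nabla_U V,W)=g^\star([U,V],W)-g^\star([V,W],U)+g^\star([W,U],V)$. Because $\bT$ is central and $[\bX,\bY]$ is the only nonzero bracket, only a few Christoffel symbols survive; the key outputs are $\nabla_\bX\bT=\bY$, $\nabla_\bY\bT=-\bX$ and $\nabla_\bT\bT=0$, i.e. $\nabla_X\bT=\Phi^\star X$ for every $X$. Since $\Phi^\star$ is skew-symmetric with respect to $g^\star$, the relation $\nabla_X\bT=\Phi^\star X$ shows at once that $g^\star(\nabla_X\bT,Y)+g^\star(X,\nabla_Y\bT)=0$, so $\bT$ is Killing and the structure is K-contact. (Equivalently, this follows from Cartan's formula $\mathcal{L}_\bT=d\,\iota_\bT+\iota_\bT\,d$ applied to $\phi^\star,\psi^\star,\omega^\star$, using $d\phi^\star=0$, $d\psi^\star=-2\,\phi^\star\wedge\psi^\star$, $d\omega^\star=2\,\phi^\star\wedge\psi^\star$ and $\phi^\star(\bT)=\psi^\star(\bT)=0$, $\omega^\star(\bT)=1$, which give $\mathcal{L}_\bT g^\star=0$.)

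Next I would substitute the connection values into $R(U,V)W=\nabla_U\nabla_V W-\nabla_V\nabla_U W-\nabla_{[U,V]}W$ and evaluate $R(X,\bT)Y$ on all basis pairs, namely $R(\bX,\bT)\bX$, $R(\bX,\bT)\bY$, $R(\bX,\bT)\bT$, $R(\bY,\bT)\bX$, $R(\bY,\bT)\bY$ and $R(\bY,\bT)\bT$, comparing each with the right-hand side $g^\star(X,Y)\bT-g^\star(\bT,Y)X$. By left-invariance, agreement on this finite list of basis identities establishes the identity for all vector fields, and Proposition \ref{prop:S-c} then yields the Sasakian property.

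I expect the main obstacle to be bookkeeping rather than anything conceptual. The terms from $\nabla_{[\bX,\bY]}=2\nabla_\bY-2\nabla_\bT$ must be tracked carefully when computing $R(\bY,\bT)\cdot$, and the sign convention for $R$ has to be kept consistent with the form of the curvature identity stated in Proposition \ref{prop:S-c}; with the convention fixed, all six checks reduce to short constant-coefficient computations in the left-invariant frame.
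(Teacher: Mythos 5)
Your proposal follows essentially the same route as the paper's proof: compute the Levi-Civita connection of the left-invariant orthonormal frame $\{\bX,\bY,\bT\}$ via Koszul's formula using the single bracket $[\bX,\bY]=2(\bY-\bT)$, check that $\bT$ is Killing, verify the curvature identity of Proposition \ref{prop:S-c} on basis pairs, and conclude via that proposition. The only point to watch is the one you already flag yourself: the paper's curvature convention $R(X,Y)Z=\nabla_Y\nabla_XZ-\nabla_X\nabla_YZ+\nabla_{[X,Y]}Z$ is the negative of the standard one you wrote, so the identity $R(X,\bT)Y=g^\star(X,Y)\bT-g^\star(\bT,Y)X$ must be verified with that convention; otherwise your argument is sound, and your observation that $\nabla_X\bT=\Phi^\star X$ together with skew-symmetry of $\Phi^\star$ gives a slightly cleaner Killing-field check than the paper's direct expansion in components.
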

\begin{proof}
If $\nabla$ is the Riemannian connection of $g^\star$, then using Koszul's formula \begin{equation*}\label{eq:Koszul}
-2g(Z,\nabla_YX)=g([X,Z],Y)+g([Y,Z],X)+g([X,Y],Z),
\end{equation*} we have:  
\begin{eqnarray*}
&&
\nabla_\bX\bX=0,\quad \nabla_\bY\bX=-2\bY+\bT,\quad \nabla_\bT\bX=\bY,\\
&&
\nabla_\bX\bY=-\bT,\quad \nabla_\bY\bY=2\bX,\quad \nabla_\bT\bY=-\bX,\\
&&
\nabla_\bX\bT=\bY,\quad \nabla_\bY\bT=-\bX,\quad \nabla_\bT\bT=0.
\end{eqnarray*}
Denote by $R$ the curvature tensor,
\begin{equation*}\label{eq:Curvten}
R(X,Y)Z=\nabla_Y\nabla_XZ-\nabla_X\nabla_YZ+\nabla_{[X,Y]}Z.
\end{equation*} 
Then 
\begin{eqnarray}\label{eq:CT}
&&\notag
R(\bX,\bY)\bX=-7\bY,\quad
R(\bX, \bT)\bX=\bT,\quad R(\bY, \bT)\bX=0,\\
&&
R(\bX, \bY)\bY=\bX,\quad
R(\bX, \bT)\bY=0,\quad R(\bY, \bT)\bY=\bT,\\
&&\notag
R(\bX, \bY)\bT=4\bX,\quad
R(\bX, \bT)\bT=-\bX,\quad R(\bY, \bT)\bT=-\bY.
\end{eqnarray}
To prove that $\frakH^{\star}$ is $K$-contact, that is, $\bT$ is Killing, we show that
$$
(\calL_\bT g^\star)(U,V)=g^\star(\nabla_V\bT, U)+g^\star(V,\nabla_U\bT)=0,
$$
for any vector fields $U, V$ on $\frakH^{\star}$.
Taking arbitrary vector fields $U=a\bX+b\bY+c\bT, V=a'\bX+b'\bY+c'\bT$, we get that
$$
g^\star(\nabla_V\bT, U)+g^\star(V,\nabla_U\bT)=-ab'+a'b-a'b+ab'=0. 
$$
Finally, it follows from equations (\ref{eq:CT}) that $R(U,\bT)V=-ac'\bX-bc'\bY+(aa'+bb')\bT$. 
Because $g^\star(U, V)\bT-g^\star(\bT, V)U=(aa'+bb'+cc')\bT-c'(a\bX+b\bY+c\bT)$, we also have
$$R(U,\bT)V=g^\star(U, V)\bT-g^\star(\bT,V)U$$ for any vector fields $U, V$ on $\frakH^{\star}$. Now the theorem is proved.
\end{proof}
 Using the relation $K(U, V)=g(R(U, V) U, V)$ for sectional curvature of planes spanned by unit vectors $U, V$, we obtain the following:
\begin{cor}\label{cor:kurv}
The sectional curvatures of distinguished planes are:
$$
K(\bX,\bY)=-7,\quad K(\bX,\bT)=1,\quad K(\bY,\bT)=1.
$$
\end{cor}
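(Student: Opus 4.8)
The plan is to apply the stated formula $K(U,V)=g^\star(R(U,V)U,V)$ directly to the orthonormal basis $\{\bX,\bY,\bT\}$. The key simplification is that the metric $g^\star$ was \emph{defined} by declaring this basis orthonormal (see equation (\ref{eq-Riem}) and the paragraph preceding it), so each of the three distinguished planes is already spanned by an orthonormal pair. Consequently the general sectional-curvature expression, whose denominator is $g(U,U)g(V,V)-g(U,V)^2$, has denominator equal to $1$ for each of these pairs, and the curvature reduces to the single inner product $g^\star(R(U,V)U,V)$ with no normalisation needed.

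First I would extract from the list (\ref{eq:CT}), established during the proof of Theorem \ref{thm:HstarSas}, exactly the three curvature-tensor entries that the formula requires:
$$
R(\bX,\bY)\bX=-7\bY,\qquad R(\bX,\bT)\bX=\bT,\qquad R(\bY,\bT)\bY=\bT.
$$
These are precisely the values of $R(U,V)U$ as $(U,V)$ ranges over the pairs $(\bX,\bY)$, $(\bX,\bT)$, $(\bY,\bT)$.

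Then I would substitute and read off the answers using orthonormality. For the horizontal plane $\{\bX,\bY\}$ one gets $K(\bX,\bY)=g^\star(-7\bY,\bY)=-7\,g^\star(\bY,\bY)=-7$; for the two mixed planes one gets $K(\bX,\bT)=g^\star(\bT,\bT)=1$ and $K(\bY,\bT)=g^\star(\bT,\bT)=1$. This completes the proof.

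There is essentially no obstacle here: all the genuine labour, namely computing the Levi--Civita connection via Koszul's formula and then assembling the curvature tensor, was already carried out in Theorem \ref{thm:HstarSas}. The only point deserving a moment's attention is bookkeeping of conventions, so that the sign in the sectional-curvature formula $K(U,V)=g^\star(R(U,V)U,V)$ is consistent with the curvature-tensor convention $R(X,Y)Z=\nabla_Y\nabla_X Z-\nabla_X\nabla_Y Z+\nabla_{[X,Y]}Z$ adopted in the proof above; once this is fixed, the corollary follows by pure substitution.
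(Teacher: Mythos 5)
Your proposal is correct and is exactly the argument the paper intends: the corollary is stated as an immediate consequence of the relation $K(U,V)=g^\star(R(U,V)U,V)$ for unit vectors, applied to the orthonormal basis $\{\bX,\bY,\bT\}$ using the curvature entries $R(\bX,\bY)\bX=-7\bY$, $R(\bX,\bT)\bX=\bT$, $R(\bY,\bT)\bY=\bT$ computed in the proof of Theorem \ref{thm:HstarSas}. Nothing further is needed.
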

Recall that if $\{X_1,X_2,X_3\}$ is an orthonormal  basis of a Riemannian 3-manifold $M$, then the Ricci curvature in the direction of $X_i$ is
$$
{\rm Ric}(X_i)=\frac{1}{2}\sum_{j\neq i}K(X_i,X_j).
$$
Moreover, the scalar curvature $K$ is
$$
K=\frac{1}{3}\sum_{i=1}^3{\rm Ric}(X_i).
$$
We obtain straightforwardly:
\begin{cor}\label{cor:RicH}
The Ricci curvatures ${\rm Ric}(\bX)$, ${\rm Ric}(\bY)$, ${\rm Ric}(\bT)$, in the directions of $\bX$, $\bY$ and $\bT$  are respectively
\begin{eqnarray*}
&&
{\rm Ric}(\bX)=
-3,\quad
{\rm Ric}(\bY)=
-3,\quad
{\rm Ric}(\bT)=
1.
\end{eqnarray*}
Therefore the scalar curvature is $K=-\frac{5}{3}$.
\end{cor}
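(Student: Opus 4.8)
The plan is to proceed by direct substitution, since all the geometric content has already been assembled in Corollary \ref{cor:kurv}. The sectional curvatures of the three coordinate planes are recorded there as $K(\bX,\bY)=-7$, $K(\bX,\bT)=1$ and $K(\bY,\bT)=1$, and these are exactly the three quantities that enter the two displayed formulas for the Ricci and scalar curvatures. No new curvature computation is needed; the only task is to feed the data from (\ref{eq:CT}) through Corollary \ref{cor:kurv} into the definitions with the correct pairings and normalisation constants.

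First I would compute each Ricci curvature by summing the sectional curvatures over the two coordinate planes containing the given direction and applying the factor $\tfrac12$. For the direction $\bX$ the relevant planes are $(\bX,\bY)$ and $(\bX,\bT)$, giving ${\rm Ric}(\bX)=\tfrac12\bigl(K(\bX,\bY)+K(\bX,\bT)\bigr)=\tfrac12(-7+1)=-3$. The direction $\bY$ is symmetric: the planes $(\bY,\bX)$ and $(\bY,\bT)$ yield ${\rm Ric}(\bY)=\tfrac12(-7+1)=-3$. For $\bT$ the two planes $(\bT,\bX)$ and $(\bT,\bY)$ each contribute $1$, so ${\rm Ric}(\bT)=\tfrac12(1+1)=1$.

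Finally, I would feed these three numbers into the scalar curvature formula: averaging gives $K=\tfrac13\bigl({\rm Ric}(\bX)+{\rm Ric}(\bY)+{\rm Ric}(\bT)\bigr)=\tfrac13(-3-3+1)=-\tfrac53$, which is the asserted value. Since the computation is purely arithmetic, there is no genuine obstacle here; the single point requiring care is bookkeeping with the normalisation, namely checking that the sectional curvatures of Corollary \ref{cor:kurv} are consistent with the convention $K(U,V)=g(R(U,V)U,V)$ and the curvature tensor (\ref{eq:CT}), so that the factor $\tfrac12$ in the Ricci formula and the factor $\tfrac13$ in the scalar formula are applied to the correct symmetric combinations. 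With that consistency confirmed, the corollary follows immediately.
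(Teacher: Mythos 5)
Your computation is correct and is exactly what the paper does: it applies the definitions ${\rm Ric}(X_i)=\tfrac12\sum_{j\neq i}K(X_i,X_j)$ and $K=\tfrac13\sum_i{\rm Ric}(X_i)$ stated just before the corollary to the sectional curvatures of Corollary \ref{cor:kurv}, which the paper labels as obtained ``straightforwardly.'' Your attention to the normalisation conventions (the $\tfrac12$ and $\tfrac13$ factors, which differ from the more common unnormalised definitions) is the only point of substance, and you handle it consistently with the paper's stated conventions.
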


\subsubsection{From $\frakH^{\star}$ to $T_1({\bH^{1}_\C})$}\label{sec:T1}

There is a number of geometric objects to which $\frakH^\star$ may be identified to. In particular, $\frakH^\star$ can be identified to the unit tangent bundle $T_1(\bH^1_\C)$ of the hyperbolic space $\bH^1_\C$. To see this, we consider the hyperbolic plane $\bH^1_\C$ modelled in the left half-plane $\calL=\{\zeta\in\C\;|\;\Re(\zeta)<0\}$ and its unit tangent bundle  $T_1(\bH^1_\C)$, 
which is diffeomorphic to $\bH^1_\C\times S^1$.
There is  Kor\'anyi map $K:\frakH^\star\to T_1(\bH^1_\C)$  given by
\begin{equation}\label{eq:Kmap}
K(z,t)= (-|z|^2+it,\;\arg z),\quad(z,t)\in\frakH^\star.
\end{equation}
The inverse $K^{-1}:T_1(\bH^1_\C)\to\frakH^\star$ is then given by 
\begin{equation}\label{eq:kormapinv}
K^{-1}(\zeta, \phi)=\left(\sqrt{-\Re(\zeta)}e^{i\phi},\; \Im(\zeta)\right),\quad (\zeta, \phi)\in T_1(\bH^1_\C).
\end{equation}
and the map $K$ is a diffeomorphism.

Let $(\frakH^\star;\omega^\star,\bT,\Phi^\star,g^\star)$ be the Sasakian structure of the 
affine-rotational group. 
Suppose that $(\xi+i\eta, \phi)$ are coordinates for $\bH^1_\C\times S^1$. Then let
$$\omega=(K^{-1})^\ast \omega^{\star},\quad T=K_{*}(\bT),\quad \Phi=K_{\ast}\circ\Phi^{\star}\circ(K^{-1})_\ast,
\quad g=(K^{-1})^{\ast}g^{\star}.$$ Explicitly,
$$\omega=-\frac{d\eta}{2\xi}+d\phi, \quad T=\partial_\phi,\quad
g=\frac{d\xi^2+d\eta^2}{4\xi^2}+\left(d\phi-\frac{d\eta}{2\xi}\right)^2.$$
 
One can know that
$(\bH^1_\C\times S^1; \omega, T, \Phi, g)$ is a Sasakian structure for the unit tangent bundle of the hyperbolic plane, see \cite{Sa}.
 The Carnot-Carath\'eodory isometry group is just $\rm{SU}(1, 1)$.  

\subsection{The K\"ahler structure of $\calC(\frakH^\star)$}\label{sec:kcon}
Since $(\frakH^\star;\omega^\star,\bT,\Phi^\star,g^\star)$ is Sasakian we immediately have for the Riemannian cone $\calC(\frakH^\star)=\frakH^\star\times_{r^2}\R_{>0}$ is K\"ahler. Here, the complex structure of $\frakH^\star\times\R_{>0}$ is given in terms of the basis $\{\bX,\bY,\bT,r\partial_r\}$ by
\begin{equation}\label{eq:J-L}
\J\bX=\bY,\quad \J\bY=-\bX,\quad \J\bT=-r\partial_r,\quad \J(r\partial_r)=\bT,
\end{equation}
and the K\"ahler metric as well as the fundamental 2-form are, respectively,

\begin{eqnarray}
&&\label{eq:grHC}
g_r^\star=dr^2+r^2\;g^\star,\\
&&\label{eq:OrHC}
\Omega_r^\star=d\left(\frac{r^2\omega^\star}{2}\right)=r\;dr\wedge\omega^\star+\frac{r^2}{2}d\omega^\star=r\;dr\wedge\omega^\star+r^2\;\phi^\star\wedge\psi^\star.
\end{eqnarray}

\medskip

It is clear that $\phi^r=r\phi^\star$, $\psi^r=r\psi^\star$, $\omega^r=r\omega^\star$ and $dr$ form an orthonormal basis for the cotangent space of $\calC(\frakH^\star)$; in this basis
$$
\Omega_r^\star=\phi^r\wedge\psi^r+dr\wedge\omega^r.
$$
The dual basis is the set $\{\bX_r,\bY_r,\bT_r,\bS_r\}$ where
$$
\bX_r=(1/r)\bX,\quad \bY_r=(1/r)\bY,\quad \bT_r=(1/r)\bT,\quad \bS_r=\partial/\partial r.
$$
The only non-vanishing Lie bracket relations are
\begin{eqnarray*}
&&
[\bX_r,\bY_r]=(2/r)(\bY_r-\bT_r),\quad[\bX_r,\bS_r]=(1/r)\bX_r,\quad
[\bY_r,\bS_r]=(1/r)\bY_r,\quad[\bT_r,\bS_r]=(1/r)\bT_r.
\end{eqnarray*}

A basis for the (1, 0) tangent space comprises $\bZ, \bW$, where
$$\bZ=\frac{1}{2}(\bX-i\bY),\qquad \bW=\frac{1}{2}(\bT+i\bS),$$
where $\bS=r\partial_r$.
 Accordingly, the (1, 0) cotangent space has a basis comprising 
$d \bZ, d\bW$, where
$$d\bZ=\phi^\star+i \psi^\star,\qquad d\bW=\omega^{\star}-i(1/r)dr.$$
Let $\rho: \calC(\frakH^{\star})\to \R$ be a smooth function. Then the (1, 0) and (0, 1) differentials are given respectively by
$$\partial \rho=\bZ(\rho)d\bZ+\bW(\rho)d\bW,$$
$$\bar{\partial}\rho=\overline{\bZ}(\rho)d\overline{\bZ}+\overline{\bW}(\rho)d\overline{\bW}.$$

\medskip

In the next proposition we compute the sectional curvatures of distinguished planes. 
\begin{prop}
All sectional curvatures of distinguished planes vanish besides that of the distinguished two planes spanned by $\bX_r, \bY_r$ and $\bT_r, \bS_r$ respectively:
$$
K_r(\bX_r, \bY_r)=-8/r^2<0, \qquad K_r(\bT_r, \bS_r)=-1/r^2<0.
$$
\end{prop}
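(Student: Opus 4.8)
The plan is to compute the sectional curvatures directly from the Riemannian curvature tensor of $(\calC(\frakH^\star),g_r^\star)$, proceeding exactly as in Theorem \ref{thm:HstarSas} and Corollary \ref{cor:kurv} but now in the four-dimensional orthonormal frame $\{\bX_r,\bY_r,\bT_r,\bS_r\}$. Because this frame is orthonormal and the inner products of its members are constant, every computation is purely algebraic once the structure constants are known, so the first step is to complete the table of Lie brackets. The four brackets involving $\bS_r$ are already recorded, and the remaining two follow from the base relations $[\bX,\bT]=[\bY,\bT]=0$ together with $\bX_r=(1/r)\bX$, $\bY_r=(1/r)\bY$, $\bT_r=(1/r)\bT$; they give $[\bX_r,\bT_r]=[\bY_r,\bT_r]=0$.

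With the structure constants in hand I would apply Koszul's formula in its orthonormal form, so that the metric-derivative terms drop out and $2g_r^\star(\nabla_U V,W)$ reduces to a sum of three bracket coefficients. I expect the connection to split into a horizontal part that reproduces the rescaled base connection of Theorem \ref{thm:HstarSas} and a radial part of the universal metric-cone type, namely $\nabla_{\bU}\bS_r=(1/r)\bU$ and $\nabla_{\bS_r}\bU=0$ for $\bU\in\{\bX_r,\bY_r,\bT_r\}$, with each $\nabla_{\bU}\bU$ picking up a $-(1/r)\bS_r$ term from the warping. Substituting into $R(U,V)W=\nabla_V\nabla_U W-\nabla_U\nabla_V W+\nabla_{[U,V]}W$ and reading off $K_r(U,V)=g_r^\star(R(U,V)U,V)$ then gives the six coordinate-plane curvatures.

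For the plane $\mathrm{span}(\bX_r,\bY_r)$ the horizontal curvature of $\frakH^\star$ enters, shifted by the warping, and I expect $K_r(\bX_r,\bY_r)=-8/r^2$, the cone analogue of $K(\bX,\bY)=-7$ from Corollary \ref{cor:kurv}. A reliable independent check for all planes tangent to $\frakH^\star$ is the general metric-cone identity $K_r(U,V)=\bigl(K^\star(U,V)-1\bigr)/r^2$; applied to the base values $-7,1,1$ it returns $-8/r^2,\,0,\,0$ and so simultaneously yields $K_r(\bX_r,\bT_r)=K_r(\bY_r,\bT_r)=0$.

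The step I expect to be the crux is the treatment of the planes containing the radial field $\bS_r$, and above all $\mathrm{span}(\bT_r,\bS_r)$, which by (\ref{eq:J-L}) is the second $\J$-holomorphic plane since $\J\bT_r=-\bS_r$. For a generic metric cone the radial direction contributes no curvature, since $\nabla_{\bS_r}\bS_r=0$ and $\nabla_{\bU}\bS_r=(1/r)\bU$ conspire so that $R(\bS_r,\cdot\,)\bS_r=0$; this is exactly why I would scrutinise how the non-abelian bracket $[\bX_r,\bY_r]=(2/r)(\bY_r-\bT_r)$ propagates, through the connection, into $R(\bT_r,\bS_r)\bT_r$. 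Computing that term explicitly and then evaluating $K_r(\bT_r,\bS_r)=g_r^\star(R(\bT_r,\bS_r)\bT_r,\bS_r)$ is the single place where a warping factor or a sign is easiest to mishandle, and it is where I would concentrate the verification, reconciling the outcome with the cone prediction before recording the value.
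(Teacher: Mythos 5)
Your route is the same as the paper's: complete the bracket table for the orthonormal frame $\{\bX_r,\bY_r,\bT_r,\bS_r\}$, obtain the connection from Koszul's formula, and read off the curvature tensor and the sectional curvatures. Your predicted connection coefficients ($\nabla^r_{\bU}\bS_r=(1/r)\bU$, $\nabla^r_{\bS_r}\bU=0$, and a $-(1/r)\bS_r$ contribution to each $\nabla^r_{\bU}\bU$) agree exactly with the table the paper computes, and for the three planes tangent to $\frakH^\star$ your cone identity $K_r(U,V)=\bigl(K^\star(U,V)-1\bigr)/r^2$ returns $-8/r^2,\,0,\,0$, matching both the statement and Corollary \ref{cor:kurv}.

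The step you defer, however --- ``reconciling'' the radial plane with the cone prediction --- is exactly where the proposal stops short of a proof, and your own sanity check is in fact decisive rather than merely heuristic. For a metric cone $dr^2+r^2g$ one always has $R(\,\cdot\,,\partial_r)\partial_r=0$ regardless of the base geometry, so the non-abelian bracket $[\bX_r,\bY_r]$ cannot propagate into $R^r(\bT_r,\bS_r)\bT_r$. Concretely, with the connection table above and the paper's curvature convention,
$$
R^r(\bT_r,\bS_r)\bT_r=\nabla^r_{\bS_r}\bigl(-(1/r)\bS_r\bigr)-\nabla^r_{\bT_r}\bigl(\nabla^r_{\bS_r}\bT_r\bigr)+\nabla^r_{(1/r)\bT_r}\bT_r=(1/r^2)\bS_r-0-(1/r^2)\bS_r=0,
$$
so $K_r(\bT_r,\bS_r)=0$, not $-1/r^2$. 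Carrying your computation through therefore contradicts the stated proposition (and the paper's own claim $R^r(\bT_r,\bS_r)\bT_r=-(1/r^2)\bS_r$, which is inconsistent with the connection table that precedes it). The only non-flat distinguished plane is $\mathrm{span}(\bX_r,\bY_r)$, with $K_r=-8/r^2$. In short: your method is sound and identical to the paper's, but the reconciliation you postpone cannot produce the value $-1/r^2$; it proves a corrected statement rather than the one given, and you should say so explicitly instead of leaving the radial plane unresolved.
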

\begin{proof}
If $\nabla^r$ is the Riemannian connection, we obtain 
\begin{eqnarray*}
&&
\nabla^r_{\bX_r}\bX_r=-(1/r)\bS_r,\quad \nabla^r_{\bY_r}\bX_r=(1/r)(-2\bY_r+\bT_r),\quad \nabla^r_{\bT_r}\bX_r=(1/r)\bY_r,\quad\nabla^r_{\bS_r}\bX_r=0,\\
&&
\nabla^r_{\bX_r}\bY_r=-(1/r)\bT_r,\quad \nabla^r_{\bY_r}\bY_r=(1/r)(2\bX_r-\bS_r),\quad \nabla^r_{\bT_r}\bY_r=-(1/r)\bX_r,\quad\nabla^r_{\bS_r}\bY_r=0,\\
&&
\nabla^r_{\bX_r}\bT_r=(1/r)\bY_r,\quad \nabla^r_{\bY_r}\bT_r=-(1/r)\bX_r,\quad \nabla^r_{\bT_r}\bT_r=-(1/r)\bS_r,\quad\nabla^r_{\bS_r}\bT_r=0,\\
&&
\nabla_{\bX_r}\bS_r=(1/r)\bX_r,\quad\nabla^r_{\bY_r}\bS_r=(1/r)\bY_r,\quad\nabla^r_{\bT_r}\bS_r=(1/r)\bT_r,\quad\nabla^r_{\bS_r}\bS_r=0.
\end{eqnarray*}
Hence for the Riemannian curvature tensor $R^r$ we have
\begin{eqnarray*}
&&
R^r(\bX_r,\bY_r)\bX_r=-(8/r^2)\bY_r,\qquad
R^r(\bT_r,\bS_r)\bT_r=-(1/r^2)\bS_r.
\end{eqnarray*}
whereas
\begin{eqnarray*}
&&
R^r(\bX_r,\bT_r)\bX_r=0,\quad
R^r(\bX_r,\bS_r)\bX_r=0,\\
&&
R^r(\bY_r,\bT_r)\bY_r=0,\quad
R^r(\bY_r,\bS_r)\bY_r=0,\quad
\end{eqnarray*}
The proof follows.
\end{proof}
\begin{cor}
The Ricci curvatures of $g_r$ in the directions of $\bX_r,\bY_r,\bT_r$ and $d/dr$ are respectively
\begin{equation*}
{\rm Ric}(\bX_r)={\rm Ric}(\bY_r)=-\frac{8}{3r^2},\quad {\rm Ric}(\bT_r)={\rm Ric}(\bS_r)=-\frac{1}{3r^2},
\end{equation*} 
and the scalar curvature is
\begin{equation*}
K=-\frac{3}{2r^2}.
\end{equation*}
\end{cor}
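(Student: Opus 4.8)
The plan is to apply, in the four-dimensional setting of the cone, the two elementary averaging identities already used in Corollary \ref{cor:RicH}, namely ${\rm Ric}(X_i)=\frac{1}{n-1}\sum_{j\neq i}K(X_i,X_j)$ and $K=\frac{1}{n}\sum_{i=1}^{n}{\rm Ric}(X_i)$ for a $g$-orthonormal frame $\{X_1,\dots,X_n\}$, now with $n=4$. First I would confirm that $\{\bX_r,\bY_r,\bT_r,\bS_r\}$ is orthonormal for $g_r=dr^2+r^2g^\star$: since $\{\bX,\bY,\bT\}$ is orthonormal for $g^\star$ one gets $g_r(\bX_r,\bX_r)=r^{-2}g_r(\bX,\bX)=g^\star(\bX,\bX)=1$ and similarly for $\bY_r,\bT_r$, while $g_r(\bS_r,\bS_r)=dr^2(\partial_r,\partial_r)=1$; orthogonality is immediate because $dr$ annihilates $\bX,\bY,\bT$. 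This is what licenses the averaging formulas.

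Next I would simply read off the sectional curvatures from the preceding Proposition: among the six distinguished planes only $K_r(\bX_r,\bY_r)=-8/r^2$ and $K_r(\bT_r,\bS_r)=-1/r^2$ are nonzero. Summing over the three planes through each basis vector and dividing by $n-1=3$ then gives ${\rm Ric}(\bX_r)={\rm Ric}(\bY_r)=\frac{1}{3}(-8/r^2)=-\frac{8}{3r^2}$ and ${\rm Ric}(\bT_r)={\rm Ric}(\bS_r)=\frac{1}{3}(-1/r^2)=-\frac{1}{3r^2}$. Feeding these four values into the scalar-curvature identity with $n=4$ yields $K=\frac{1}{4}\bigl(-\frac{8}{3r^2}-\frac{8}{3r^2}-\frac{1}{3r^2}-\frac{1}{3r^2}\bigr)=-\frac{3}{2r^2}$.

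Since every sectional curvature has already been produced in the previous Proposition, there is no real obstacle; the only two points deserving a moment's care are verifying orthonormality of the rescaled frame (so that the averaging identities transfer unchanged) and remembering to use the four-dimensional normalisations $1/(n-1)=1/3$ and $1/n=1/4$ in place of the $n=3$ constants that appeared for $\frakH^\star$ itself.
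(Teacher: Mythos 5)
Your proposal is correct and follows essentially the same route as the paper: the corollary is obtained directly from the sectional curvatures $K_r(\bX_r,\bY_r)=-8/r^2$ and $K_r(\bT_r,\bS_r)=-1/r^2$ of the preceding proposition by the averaged Ricci and scalar curvature formulas, adapted from dimension $3$ to dimension $4$. Your checks that the rescaled frame is $g_r$-orthonormal and that the normalisations become $1/(n-1)=1/3$ and $1/n=1/4$ are exactly the points that make the computation go through.
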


\medskip
Note that the complex coordinates of the K\"ahler cone $\calC(\frakH^\star)$ is $(z, t+i\log r).$ The affine-rotational group $\frakH^\star$ is embedded into $\calC(\frakH^\star)$ as the hypersurface $r=1$.

\section{Configuration Space of Four Points in $\partial\HC$}\label{sec-conf}

We let $\frakC_4$ be the set of ordered quadruples of pairwise distinct points in the boundary of the complex hyperbolic plane $\partial\HC$ and we denote by $\frakF_4$  the configuration space of $\frakC_4$, that is, the quotient of $\frakC_4$ with respect to the diagonal action of $\PU(2, 1)$. There are certain subsets of $\frakF_4$ with interesting geometrical properties; those properties have been studied in \cite{FP}:
\begin{itemize}
\item The subset $\frakF_4^\R$ comprising orbits of quadruples $\fp=(p_1,p_2,p_3,p_4)$ such that not all $p_i$ lie in the same $\C$-circle. This is a 4-dimensional real manifold. 
\item The subset $\frakF_4'\subset\frakF_4^\R$  comprising orbits of quadruples $\fp=(p_1,p_2,p_3,p_4)$ such that $p_2,p_3,p_4$ do not lie in the same $\C$-circle. 
This is a CR manifold of codimension 2, see Section \ref{sec-CRV4} below.
\item The subset $\frakF^\C_4\subset\frakF_4^\R$  comprising orbits of quadruples $\fp=(p_1,p_2,p_3,p_4)$ such that $p_1,p_4$ do not lie in the same orbit of the stabiliser of $p_2,p_3$. This is a 2-dimensional (disconnected) complex manifold, see Section \ref{sec-complex}. 
\end{itemize}
We introduce the subset $\frakF_4''$
comprising orbits of quadruples $\fp=(p_1,p_2,p_3,p_4)$ such that
\begin{itemize}
\item
 $p_1,p_2,p_3$ as well as $p_2,p_3,p_4$ do not all lie in the same $\C$-circle.
\item $p_1$ and $p_4$ are not in the same orbit of the stabiliser of $p_2$ and $p_3$.
\end{itemize}
It is clear that $\frakF_4''\subset\frakF_4^\C$.

\medskip

In \cite{FP}, see also \cite{Pla-CR}, the configuration space was identified to the cross-ratio variety $\frakX$. For clarity, we recall basic facts about $\frakX$ in Section \ref{sec-X}.
We prove in Section \ref{sec-mainth} our main Theorem \ref{thm-main}: the set $\frakF''_4$ can be endowed with a K\"ahler structure.  In Section \ref{sec-var} we then follow a slightly different route from the result of Gusevskii-Cunha (please refer to \cite{GC}) to show that: the set $\frakF''_4$ is naturally identified to a 4-dimensional variety inside $\C^2\times(-\pi/2,\pi/2)$. We denote this variety by $\frakV_4$ and we recover the codimension 2 CR structure $\calH$ of $\frakF''_4$ in Section \ref{sec-CRV4}. The CR structure $\calH$ is generated by a $(1,0)$ vector field $Z$, see (\ref{eq-CRV}). Recall now that the subbundle $\calH^\star$ of $T^{(1,0)}(\calC'(\frakH^\star))$ is generated by the $(1,0)$ vector field $\bZ$.  In Section \ref{sec-CReq} we prove that there is a diffeomorphism from $\calC'(\frakH^\star)$ to $\frakV_4$, which is CR with respect to $\calH$ and $\calH^\star$. In Section \ref{sec-complex} we  consider two complex structures of $\frakF''_4$ and show that they are CR equivalent but not biholomorphic.

\medskip

\subsection{Invariants, Cross-Ratio Variety}\label{sec-X}
Recall that the {\it Cartan's angular invariant} $\mathbb{A}(\fp)$ of an ordered triple $\fp=(p_1,p_2,p_3)$ of pairwise distinct points in $\partial\bH^2_\C$ is defined by 
$$
\mathbb{A}(\fp)=\arg\left(-\langle \bp_1,\bp_2\rangle\langle \bp_2,\bp_3\rangle\langle \bp_3,\bp_1\rangle\right)\in[-\pi/2,\pi/2],
$$
where $\bp_i$ are the lifts of $p_i$ respectively. The definition is independent of the choice of lifts.
Cartan's angular invariant $\mathbb{A}(\fp)$ satisfies the properties (see \cite{G}): $\A(\fp)=\pm\pi/2$ if and only if $\fp$ is a triple of points lying in the same $\C$-circle and $\A(\fp)=0$ if and only if $\fp$ is a triple of points lying in the same $\R$-circle; moreover, for triples $\fp$ and $\fp'$ of points not lying in the same $\C$-circle, there exists a $g\in{\rm PU}(2,1)$ such that $g(\fp)=\fp'$ (that is, $g(p_i)=p_i'$, $i=1,2,3$) if and only if $\A(\fp)=\A(\fp')$. 

Given a quadruple $\fp=(p_{1},p_{2}, p_{3},p_{4})\in\frakC_4$, then its {\it cross-ratio} is defined by
\[
\X(\fp)=\mathbb{X}(p_{1},p_{2}, p_{3},p_{4})=\frac{\langle \bp_{4},\bp_{2}\rangle\langle \bp_{3},\bp_{1}\rangle}{\langle \bp_{4},\bp_{1}\rangle\langle \bp_{3},\bp_{2}\rangle},
\]
where $\bp_i$ are lifts of $p_i$. The cross-ratio is independent of the choice of lifts  and remains invariant under the action of ${\rm PU}(2,1)$. 
By permuting the points of the configuration $\fp$  we will obtain 24 cross-ratios; all these are functions of the following three cross-ratios:
$$
\X_1=\X(p_1,p_2,p_3,p_4),\quad
\X_2=\X(p_1,p_3,p_2,p_4),\quad
\X_3=\X(p_2,p_3,p_1,p_4).
$$
These three cross-ratios satisfy the following two real equations:
\begin{equation}\label{eq-X}
\begin{aligned}
&
|\X_2|=|\X_1|\cdot|\X_3|,\\
&
|\X_1+\X_2-1|^2=2\Re\left(\X_1(\overline{\X_2}+\overline{\X_1}\X_3)\right).
\end{aligned}
\end{equation}
Equations (\ref{eq-X}) define the cross-ratio variety $\mathfrak{X}$, see \cite{FP}.  The following identifications hold:
\begin{itemize}
\item The subset $\frakF_4^\R$ is identified to the subset $\frakX\setminus\frakX_\R$ of $\frak{X}$, where $\frakX_\R$ is the subset of $\frakX$ comprising $(\X_1,\X_2,\X_3)$ such that all $\X_i$ are real, $\X_1+\X_2=1$ and $\X_3=1-(1/\X_1)$.
\item The subset $\frakF_4'$ is identified to the subset $\frakX\setminus\frakX'$ of $\frak{X}$, where $\frakX'$ is the subset of $\frakX$ comprising $(\X_1,\X_2,\X_3)$ such that 
$$
\X_1+\X_2=1,\quad \X_3+\frac{1}{\X_1}=1,\quad\frac{1}{\X_2}+\frac{1}{\X_3}=1.
$$ 
\item The subset $\frakF^\C_4$ is identified to the subset $\frakX\setminus\frakX_\C$ of $\frak{X}$, where $\frakX_\C$  is the subset of $\frakX$ comprising $(\X_1,\X_2,\X_3)$ such that $\X_3\in\R$.
\end{itemize}

\subsection{Main Theorem: $\frakF_4''$ and $\mathcal C'(\frakH^{\star})$}\label{sec-mainth}
We let
$$
\calC'(\frakH^{\star})=\calC(\frakH^{\star})\setminus\{(z,t,r)\;|\;\log r=t/|z|^2\}.
$$
In this section, we shall prove our main theorem:
\begin{thm}\label{thm-main}
There is a bijection $\mathcal B_0: \frakF_4''\to\mathcal C'(\frakH^{\star})$. Therefore $\frakF_4''$ inherits the K\"ahler structure of 
$\mathcal C(\frakH^{\star})$. 
\end{thm}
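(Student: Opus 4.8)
The plan is to realise $\mathcal B_0$ as the map that records the invariants of a canonical (normalised) representative of each configuration, and to match the three conditions defining $\frakF_4''$ against the single deleted set defining $\calC'(\frakH^\star)$.

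First I would set up the normalisation. Recall that $\PU(2,1)$ acts doubly transitively on $\partial\HC$ (the stabiliser of $\infty$ already acts transitively on the remaining points via Heisenberg similarities), so given a representative $\fp=(p_1,p_2,p_3,p_4)$ of a class in $\frakF_4''$ there is $g\in\PU(2,1)$ with $g(p_2)=\infty$ and $g(p_3)=(0,0)$. The residual freedom is the pointwise stabiliser of $\{\infty,(0,0)\}$, which, computing in $\SU(2,1)$ with the form $H$, consists precisely of the Heisenberg rotation--dilations $(w,s)\mapsto(\lambda e^{i\theta}w,\lambda^2 s)$ with $\lambda>0$, $\theta\in\R$. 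The $\C$-circle through $\infty$ and $(0,0)$ is the vertical axis $\{(0,s):s\in\R\}\cup\{\infty\}$, so the hypothesis that $p_1,p_2,p_3$ are not $\C$-cocircular forces $g(p_1)=(z_1,t_1)$ with $z_1\neq0$; applying the rotation--dilation with $\lambda=1/|z_1|$ and $e^{i\theta}=\overline{z_1}/|z_1|$ sends it to $(1,\,t_1/|z_1|^2)$. Writing $\tan a=t_1/|z_1|^2$ (so that $a\in(-\pi/2,\pi/2)$ is, up to the sign convention, Cartan's angular invariant $\A(p_1,p_2,p_3)$) yields the normal form $p_1=(1,\tan a)$, $p_2=\infty$, $p_3=(0,0)$, $p_4=(z,t)$.

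The decisive point is that this normal form is \emph{unique}: the only rotation--dilation fixing $(1,\tan a)$ is the identity, since $\lambda e^{i\theta}=1$ forces $\lambda=1$ and $\theta=0$. Hence the normalising $g$ is uniquely determined and $(z,t,a)$ are genuine invariants of the class $[\fp]$. I would then define $\mathcal B_0([\fp])=(z,t,e^{\tan a})$ and translate the defining conditions of $\frakF_4''$ into analytic constraints: the triple $p_2,p_3,p_4$ is $\C$-cocircular exactly when $p_4$ lies on the $t$-axis, i.e.\ when $z=0$, so $z\in\C_*$; and $p_4$ lies in the orbit of $p_1$ under the stabiliser of $\{p_2,p_3\}$ (the rotation--dilations) exactly when $t/|z|^2=\tan a$, so the last condition reads $t/|z|^2\neq\tan a$. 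Setting $r=e^{\tan a}$ (the assignment $a\mapsto e^{\tan a}$ is a bijection $(-\pi/2,\pi/2)\to\R_{>0}$, since $\tan$ maps onto $\R$), this becomes $\log r\neq t/|z|^2$. Thus $\mathcal B_0$ lands precisely in $\calC'(\frakH^\star)=\calC(\frakH^\star)\setminus\{\log r=t/|z|^2\}$.

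Injectivity is then immediate, as two representatives with the same $(z,t,e^{\tan a})$ share the same normal form and are therefore $\PU(2,1)$-equivalent. For surjectivity, given $(z,t,r)\in\calC'(\frakH^\star)$ I would exhibit the quadruple $p_1=(1,\log r)$, $p_2=\infty$, $p_3=(0,0)$, $p_4=(z,t)$: the inequalities $z\neq0$ and $\log r\neq t/|z|^2$ guarantee that it represents a class in $\frakF_4''$ whose image is $(z,t,r)$. Finally, since $\calC'(\frakH^\star)$ is an open subset of the K\"ahler cone $\calC(\frakH^\star)$ of Section~\ref{sec:kcon}, it carries the restricted K\"ahler structure, and transporting it through $\mathcal B_0$ equips $\frakF_4''$ with a K\"ahler structure.

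The main obstacle is the uniqueness of the normal form, that is, the exact determination of the residual stabiliser and the verification that it acts freely on the already-normalised $p_1$; this is what makes $(z,t,a)$ well defined and $\mathcal B_0$ simultaneously well defined and injective. The remaining care is bookkeeping, namely converting each of the three geometric conditions defining $\frakF_4''$ into the two constraints $z\neq0$ and $t/|z|^2\neq\tan a$, so that the image is \emph{exactly} $\calC'(\frakH^\star)$. That $\mathcal B_0$ is moreover a diffeomorphism, so the transported structure is K\"ahler in the smooth sense, is established later via the CR-diffeomorphism with the variety $\frakV_4$ in Section~\ref{sec-CReq}.
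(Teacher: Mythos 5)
Your proposal is correct and follows essentially the same route as the paper's proof: normalise so that $p_2=\infty$, $p_3=0$, $p_1=(1,\tan a)$, record $(z,t,e^{\tan a})$, verify that the defining conditions of $\frakF_4''$ translate exactly into $z\neq0$ and $\log r\neq t/|z|^2$, and obtain injectivity/well-definedness from the residual stabiliser $E_\lambda$. Your emphasis on the uniqueness of the normal form merely packages the paper's steps (ii) and (iii) into a single observation.
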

\begin{proof}
By the doubly transitive action on $\partial\HC$ of $\PU(2, 1)$, we normalise a quadruple $\fp=(p_1, p_2, p_3, p_4)\in\frakC''_4$ such that 
$p_2=\infty, \, p_3=0.$ Assuming that $\A(p_1, p_2, p_3)=a,$ one can get that $a\in (-\pi/2, \pi/2)$ and
$$p_1=(z_1, |z_1| \tan a), z_1\in\C_\ast.$$ Because each element in the stabiliser of $\infty, 0$ is of the
Heisenberg translation form $$(z, t)\mapsto(\lambda z, |\lambda|^2t), \, \lambda\in\C_\ast,$$ we can finally normalise $\fp=(p_1, p_2, p_3, p_4)\in\frakC_4''$ to be 
$$p_1=(1, \tan a),\quad  p_2=\infty, \quad  p_3=0, \quad  p_4=(z , t),$$
where $a\in(-\pi/2, \pi/2),$ $z\neq 0, (z, t)\neq (\lambda, |\lambda|^2\tan a)$, for each $\lambda\in\C_*$.
We then consider $\widetilde{\mathcal B_0}: \frakC_4''\to\mathcal C'(\frakH^{\star})$ given by 
$$\widetilde{\mathcal B_0}(\fp) = (z, t, e^{\tan a}).$$
Then the followings hold:
\begin{enumerate}
\item[{(i)}] If $(z, t, r)\in \mathcal C'(\frakH^{\star})$, then there exists a $\fp\in\frakC_4''$ such that $\widetilde{\mathcal B_0}(\fp) = (z, t, r).$ 
Indeed we may consider $\fp$ with 
$$p_1=(1, \log r),\quad  p_2=\infty, \quad  p_3=0, \quad  p_4=(z , t).$$
\item[{(ii)}] If $\fp\in\frakC_4''$ and  {{$g\in\rm{SU}(2, 1)$, }}then $\widetilde{\mathcal B_0}(\fp)=\widetilde{\mathcal B_0}(g(\fp)).$
\item[{(iii)}] If $\widetilde{\mathcal B_0}(\fp)=\widetilde{\mathcal B_0}({\fp^{\prime}})$ for $\fp, \fp'\in\frakC_4''$, 
then there exists a 
$g\in\rm{SU}(2, 1)$ such that 
$\fp'=g(\fp).$ To see this, 
we could normalise so that 
\begin{align*}
&p_1=(1, \tan a),\quad\,  p_2=\infty, \quad  p_3=0, \quad  p_4=(z , t),\\
&p_1'=(1, \tan a'),\quad  p_2'=\infty, \quad   p_3'=0, \quad  p_4'=(z' , t').
\end{align*}
A $g\in\rm{SU}(2, 1)$ mapping $\fp$ to $\fp'$ must be of the form $\rm{E}_\lambda\in\rm{SU}(2, 1)$, $\lambda=l+i\theta\in\C_{\ast}$, that is,
$$E_\lambda(z, t)=(e^{l+i\theta}z, e^{2l}t),$$
since it belongs to $\rm{Stab}(0, \infty).$ 
It is now clear that $p_1'=p_1,$ and $p_4'=p_4.$ 
\end{enumerate}
 The proof is complete.
\end{proof}

\subsection{$\frakF_4''$ and the variety $\frakV_4$}\label{sec-var}
The subset $\frakF_4''$ can be identified to the variety $\frakV_4$ of $\C^2\times(-\pi/2,\pi/2)$ which we will define below. In the first place, we show that if $[\fp]\in\frakF_4''$, $\fp=(p_1,p_2,p_3,p_4)$ and   $(\X_1,\X_2,\A)$ correspond to $\fp$, that is, $\X_1=\X_1(\fp)$ $\X_2=\X_2(\fp)$ and $\A=\A(p_1,p_2,p_3)$, then 
\begin{equation}\label{crv}
|\X_1+\X_2-1|^2=2\Re\left(\X_1\overline{\X_2}(1+e^{-2i\A})\right),
\end{equation}
where $\X_1+\X_2-1\neq 0$, $\Re\left(\X_1\overline{\X_2}e^{-i\A}\right)>0$ and $\arg(\X_1/\X_2)\neq 2a$. Note that we always have
\begin{equation}
|\X_1+\X_2-1|^2\le 2\Re(\X_1\overline{\X_2})+2|\X_1||\X_2|,
\end{equation}
with the equation holding only if $\arg(\X_1/\X_2)=2a$.

A variation of formula (\ref{crv}) is found in \cite{CG}. For completeness, we prove (\ref{crv}) here in a different way; we mention that 
equation (\ref{crv})
follows in \cite{CG} from the vanishing of the determinant of the Gram matrix of points of $\fp$. In our setting we make no use of Gram determinants. 

We may normalise so that $p_1=(1,\tan a)$, $p_2=\infty$, $p_3=0$ and $p_4=(z,t)$ with lifts 
$$
\bp_1=\left[\begin{matrix} -1+i \tan a\\ \sqrt{2}\\1\end{matrix}\right],\quad
\bp_2=\left[\begin{matrix} 1\\0\\0\end{matrix}\right],\quad
\bp_3=\left[\begin{matrix} 0\\ 0\\1\end{matrix}\right],\quad
\bp_{4}=\left[\begin{matrix} u\\\sqrt{2}\;z\\1\end{matrix}\right].
$$
Here $a=\A\in(-\pi/2,\pi/2)$, $z\neq 0$, $t/|z|^2\neq \tan a$ and $u=-|z|^2+it$.  
One can calculate directly that 
\begin{eqnarray*}
&\X_1&=\frac{-1-i\tan a}{u-1-i\tan a+2z},\\
&\X_2&=\frac{u}{u-1-i\tan a+2z}.
\end{eqnarray*}
Note that $\X_1,\X_2$ are well defined (the denominator does not vanish) since $(z,t)\neq(1,\tan a)$. Equation (\ref{crv}) now follows by straightforward calculation. 
From
$$\X_1+\X_2-1=-\frac{2z}{u-1-i\tan a+2 z},$$
we get that $\X_1+\X_2-1\neq0$ because of $z\neq0.$
On the other hand, we have
$$
\Re\left(\X_1\overline{\X_2}e^{-ia}\right)=\frac{|z|^2}{\cos a|u+2z-1-i\tan a|^2}>0.
$$
Finally,  one can get that $\arg(-\bar{u})\neq a$, otherwise $p_1,p_4$ are in the same orbit of the stabiliser of $p_2,p_3$. This gives $\arg(\X_1/\X_2)\neq 2a$.

\bigskip

We now define $\frakV_4$ to be the subset of $\C^2\times(-\pi/2,\pi/2)$ comprising $(w_1,w_2,a)$ such that
$$
|w_1+w_2-1|^2=2\Re\left(w_1\overline{w_2}(1+e^{-2ia})\right),
$$
with $w_1+w_2-1\neq 0$, $\Re\left(w_1\overline{w_2}e^{-ia}\right)>0$ and $\arg(w_1/w_2)\neq 2a$.

The identification of $\frakF''_4$ to $\frakV_4$ is by the mapping $B_0:\frakF_4''\to\frakV_4$ given for each $[\fp]$ by
\begin{equation}
B_0([\fp])=\left(\X_1,\X_2,\A(\fp)\right),
\end{equation}
where 
$\A(\fp)=\A(p_1,p_2,p_3)$. This mapping is bijective. Indeed, given $(w_1,w_2,a)\in\frakV_4$ we may consider the quadruple $\fp$ of points $p_1=(1,\tan a)$, $p_2=\infty$, $p_3=o$ and $p_4=(z,t)$ where
$$
z=\frac{w_1+w_2-1}{(1+e^{-2ia})w_1},\quad t=-2\Im\left(\frac{w_2}{(1+e^{-2ia})w_1}\right).
$$
Straightforward calculation yields to
$$
\X_1=w_1,\quad \X_2=w_2,\quad \A(p)=a.
$$ 
If additionally, for $\fp, \fp'\in\frakC_4''$ there exists a $g\in{\rm SU}(2,1)$ such that $g(\fp)=\fp'$, 
then clearly $\X_i=\X_i'$, $i=1,2$ and $\A(\fp)=\A(\fp')$. Finally, if $\fp, \fp'\in\frakC_4''$ such that $\X_i=\X_i'$, $i=1,2$ and $\A(\fp)=\A(\fp')$, then we may normalize so that 
\begin{align*}
&p_1=(1, \tan a),\quad\,  p_2=\infty, \quad  p_3=0, \quad  p_4=(z , t),\\
&p_1'=(1, \tan a'),\quad  p_2'=\infty, \quad   p_3'=0, \quad  p_4'=(z' , t').
\end{align*}
Due to our assertion, we then obtain that $p_1=p_1'$ and $p_4=p_4'$.
 
\subsection{CR structure of $\frakF_4''$}\label{sec-CRV4}
We now describe the CR structure of $\frakF_4''$ as this is obtained by its identification to $\frakV_4$. For this, we consider the defining function of $\frakV_4$:
$$
F(w_1,w_2,a)=|w_1+w_2-1|^2-2\Re(w_1\overline{w_2}(1+e^{-2ia}))=0.
$$
The CR structure is obtained as the kernel of
\begin{eqnarray*}
\partial F&=&\frac{\partial F}{\partial w_1}dw_1+\frac{\partial F}{\partial w_2}dw_2,
\end{eqnarray*}
where
\begin{eqnarray*}
\frac{\partial F}{\partial w_1}&=&
\overline{w_1}-e^{-2ia}\overline{w_2}-1=-\beta,\\
\frac{\partial F}{\partial w_2}&=&
\overline{w_2}-e^{2ia}\overline{w_1}-1=\alpha.
\end{eqnarray*}
Note that $\alpha$ and $\beta$ cannot be simultaneously zero; this would lead to the contradiction $a=\pm\pi/2$. Therefore the codimension 2 CR structure on $\frakV_4$ is 
\begin{equation}\label{eq-CRV}
\calH=\ker(\partial F)=\left\langle Z=\alpha\frac{\partial}{\partial w_1}+\beta\frac{\partial}{\partial w_2}\right\rangle.
\end{equation}
Since
$$
\partial\overline{\partial}F=dw_1\wedge d\overline{w_1}
-e^{2ia}dw_2\wedge d\overline{w_1}-e^{-2ia}dw_1\wedge d\overline{w_2}+
dw_2\wedge d\overline{w_2},
$$
we obtain
$$
\partial\overline{\partial}F(Z,\overline{Z})=|\alpha|^2-2\Re(\alpha\overline{\beta}e^{-2ia})+|\beta|^2=|1+e^{2ia}|^2=4\cos^2a>0,
$$
that is, the CR structure is strictly pseudoconvex. Set $w_i=u_i+iv_i$, $i=1,2$, $\alpha=\alpha_1+i\alpha_2$ and $\beta=\beta_1+i\beta_2$. Then the contact form $\tau$ is
$$
\tau=-\beta_2du_1-\beta_1dv_1+\alpha_2du_2+\alpha_1dv_2.
$$ 
\subsection{CR-equivalence}\label{sec-CReq}
Let $\calC'(\frakH^\star)$ with coordinates $(z,t,r)$ and the variety $\frakV_4$ with coordinates $(w_1,w_2,a)$. We also set $u=-|z|^2+it$ and $q=-1-i\log r$. Then we consider the mapping $G:\calC'(\frakH^\star)\to\frakV_4$, where
\begin{equation*}
G(z,t,r)=\left(\frac{q}{u+2z+q},\;\frac{u}{u+2z+q},\;\arctan(-\Im(q))\right).
\end{equation*}
This mapping is a diffeomorphic bijection; the inverse $G^{-1}:\frakV_4\to\calC'(\frakH^\star)$ is given by
\begin{equation*}
G^{-1}(w_1,w_2,a)=\left(\frac{w_1+w_2-1}{(1+e^{-2ia})w_1},\; 
-2\Im \left(\frac{w_2}{(1+e^{-2ia})w_1}\right),\;e^{\tan a} 
\right).
\end{equation*}
Using the map $G$ we shall display the equivalence of CR structures for $\frakF_4''.$
\begin{prop}\label{prop-CR-H}
We consider $\calC'(\frakH^\star)$ with the CR structure $\calH^\star=\langle\bZ\rangle$ and the variety $\frakV_4$ with the CR structure $\calH$ as in (\ref{eq-CRV}). Then the  diffeomorphism $G:\calC'(\frakH^{\star})\to\frakV_4$ is CR with respect to these structures. 
\end{prop}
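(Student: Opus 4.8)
The plan is to prove that the differential $dG$, extended complex-linearly to complexified tangent spaces, carries the generator $\bZ$ of $\calH^\star$ into the complex line $\calH=\langle Z\rangle$; since $G$ is a diffeomorphism, this is exactly the assertion that $G$ is CR. Thus it suffices to show $dG(\bZ)=\lambda Z$ for some nowhere-vanishing complex function $\lambda$. I would carry this out by evaluating $\bZ=z\,\partial_z+i|z|^2\,\partial_t$ on the pulled-back coordinate functions, where $w_1=q/D$, $w_2=u/D$, with $u=-|z|^2+it$, $q=-1-i\log r$ and $D=u+2z+q$.

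The decisive point is a pair of vanishing statements. First, $\bZ$ carries no $\partial_r$, so it kills every function of $r$ alone; in particular $\bZ(q)=\bZ(\overline q)=0$, $\bZ(a)=0$ (as $a=\arctan(\log r)$), and $\bZ(e^{\pm 2ia})=0$. Second, a short Wirtinger computation shows that $\bZ(\overline z)=0$ and that the mixed term cancels to give $\bZ(\overline u)=0$, while $\bZ(u)=-2|z|^2$ and $\bZ(z)=z$. Since the conjugate numerators $\overline q,\overline u$ and the conjugate denominator $\overline D=\overline u+2\overline z+\overline q$ are then all annihilated by $\bZ$, it follows at once that $\bZ(\overline{w_1})=\bZ(\overline{w_2})=0$. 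Combined with $\bZ(a)=0$, this shows that
$$
dG(\bZ)=\bZ(w_1)\,\partial_{w_1}+\bZ(w_2)\,\partial_{w_2}
$$
has no $\partial_{\overline{w_1}}$, $\partial_{\overline{w_2}}$ or $\partial_a$ component; that is, $dG(\bZ)$ lies in the ambient $(1,0)$-space $\operatorname{span}_{\C}\{\partial_{w_1},\partial_{w_2}\}$.

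At this stage I would finish structurally rather than by brute force. The vector $dG(\bZ)$ is nonzero (as $dG$ is an isomorphism and $\bZ\neq 0$) and tangent to $\frakV_4$ (as $G$ maps into $\frakV_4$). But the intersection of $\operatorname{span}_{\C}\{\partial_{w_1},\partial_{w_2}\}$ with the complexified tangent space of $\frakV_4$ is cut out inside that plane by the single condition $\partial F=0$, namely $-\beta\,v(w_1)+\alpha\,v(w_2)=0$, and since $(\alpha,\beta)\neq(0,0)$ this intersection is precisely the one-dimensional space $\calH=\langle Z\rangle$. Hence $dG(\bZ)$ is a nonzero multiple of $Z$, which is what we wanted.

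For readers who prefer the explicit constant, the same conclusion follows by direct computation once one notices the two simplifying identities $e^{2ia}=q/\overline q$ (because $1\pm i\log r=-q,-\overline q$) and $2|z|^2+u=-\overline u$, together with $q+\overline q=-2$: these reduce $\alpha,\beta$ to $\alpha=2(1-\overline z)/\overline D$ and $\beta=2(q\overline z-\overline u)/(q\,\overline D)$, and both $\bZ(w_1)/\alpha$ and $\bZ(w_2)/\beta$ collapse to $\lambda=-qz\overline D/D^2$. The only genuine obstacle, in either route, is spotting these cancellations (especially $\bZ(\overline u)=0$ and $e^{2ia}=q/\overline q$); everything else is routine bookkeeping.
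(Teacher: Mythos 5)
Your proof is correct, and its second half takes a genuinely different route from the paper's. The first step is identical: both you and the authors observe that $\bZ(\overline z)=\bZ(\overline u)=0$ and that $\bZ$ involves no $\partial_r$, so the $\partial_{\overline{w_1}}$, $\partial_{\overline{w_2}}$ and $\partial_a$ components of $G_*(\bZ)$ all vanish and $G_*(\bZ)\in\operatorname{span}_\C\{\partial_{w_1},\partial_{w_2}\}$. From there the paper finishes by brute force: it computes $\bZ(w_1)=2zq(\overline z-1)/D^2$ and $\bZ(w_2)=2z(\overline u-\overline z q)/D^2$, rewrites these numerators and the prefactor back in the $(w_1,w_2,a)$ coordinates, and matches them term by term against $\alpha=\partial F/\partial w_2$ and $\beta=-\partial F/\partial w_1$ to exhibit $G_*(\bZ)=kZ$ with $k=-w_1(w_1+w_2-1)/((1+e^{2ia})\overline{w_1})$. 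You instead close the argument structurally: since $F\circ G\equiv 0$, the (complexified) pushforward $G_*(\bZ)$ lies in $\ker(dF)$, and on the plane $\operatorname{span}_\C\{\partial_{w_1},\partial_{w_2}\}$ the condition $dF=0$ reduces to the single linear equation $-\beta\,v_1+\alpha\,v_2=0$, whose solution line is exactly $\langle Z\rangle$; injectivity of $dG$ gives the nonvanishing of the factor. This buys you freedom from the messiest part of the paper's computation (the re-expression of $q(\overline z-1)$ and $\overline u-\overline z q$ in terms of $w_1,w_2,a$), at the cost of not exhibiting the conformal factor explicitly — though your supplementary computation does recover it, and your $\lambda=-qz\overline D/D^2$ agrees with the paper's $k$ after substituting $w_1=q/D$, $w_1+w_2-1=-2z/D$ and $1+e^{2ia}=-2/\overline q$. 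Your auxiliary identities ($e^{2ia}=q/\overline q$, $q+\overline q=-2$, $2|z|^2+u=-\overline u$) all check out.
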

\begin{proof}
We calculate directly:
\begin{eqnarray*}
G_*(\bZ)&=&\bZ\left(\frac{q}{u+2z+q}\right)\frac{\partial}{\partial w_1}+\bZ\left(\frac{\overline{q}}{\overline{u}+2\bar{z}+\bar{q}}\right)\frac{\partial}{\partial \overline{w_1}}+\\
& &\bZ\left(\frac{u}{u+2z+q}\right)\frac{\partial}{\partial w_2}+\bZ \left(\frac{\bar{u}}{\bar{u}+2\bar{z}+\bar{q}}\right)\frac{\partial}{\partial \overline{w_2}}\\
&& +\bZ(\arctan\log r)\frac{\partial}{\partial a}.
\end{eqnarray*}
The second and the fourth term vanish since $\bZ(\bar{u})=\bZ(\bar{z})=0$. The last term also vanishes because $\bZ_r$ does not involve derivation with respect to $r$. On the other hand, since
$$
\bZ(u)=-{2|z|^2},\quad \bZ(z)={z},
$$
we have
$$
\bZ\left(\frac{q}{u+2z+q}\right)={2z}\cdot\frac{q(\bar{z}-1)}{(u+2z+q)^2},\quad
\bZ\left(\frac{u}{u+2z+q}\right)={2z}\cdot\frac{\bar{u}-\bar{z}q}{(u+2z+q)^2}.
$$
Therefore we get that
\begin{equation*}
G_*(\bZ)={2z}\cdot\frac{1}{(u+2z+q)^2}\left(q(\bar{z}-1)\frac{\partial}{\partial w_1}+(\bar{u}-\bar{z}q)\frac{\partial}{\partial w_2}\right).
\end{equation*}
By calculating
\begin{eqnarray*}
&&
{2z}\cdot\frac{1}{(u+2z+q)^2}=\frac{1}{2}\cdot w_1(w_1+w_2-1)(1+e^{-2ia}),\\
&&
q(\bar{z}-1)=-\frac{2(\overline{w_2}-e^{2ia}\overline{w_1}-1)}{|1+e^{-2ia}|^2\overline{w_1}},\\
&&\bar{u}-\bar{z}q=\frac{2(\bar{w_1}-\bar{w_2}e^{-2ia}-1)}{|1+e^{-2ia}|^2\overline{w_1}},
\end{eqnarray*}
we finally obtain that
$$
G_*(\bZ)=kZ,\quad k=-\frac{w_1(w_1+w_2-1)}{(1+e^{2ia})\bar{w_1}},
$$
where $Z$ is as in (\ref{eq-CRV}). This proves the result.
\end{proof}

\subsection{Comparison of complex structures}\label{sec-complex}
For the definition of the complex structure in the subset $\frakF^\C_4$ we refer to \cite{FP}, \cite{Pla-CR}. In brief, the complex structure is obtained by identifying the set $\frakF^\C_4$ to $\C P^1\times\C\setminus\R$; the map $\calB_1$ defined in Theorem \ref{thm-comp} below is actually the restriction of this identification in $\frakF''_4$. 

\medskip

\begin{thm}\label{thm-comp}
The set $\frakF''_4$ admits two complex structures. The identity map of $\frakF''_4$ is CR but not biholomorphic with respect to these two complex structures. 
\end{thm}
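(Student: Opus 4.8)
The plan is to exhibit both complex structures explicitly in a single frame adapted to the CR subbundle and to compare them there. The first structure, call it $\J_0$, is the one that $\frakF_4''$ inherits from the Kähler cone $\calC'(\frakH^\star)$ through the bijection $\calB_0$ of Theorem \ref{thm-main} (which becomes a diffeomorphism once $\frakF_4''$ is given the smooth structure of $\frakV_4$ via $B_0$ and $G$). In the frame $\{\bX,\bY,\bT,\bS\}$ with $\bS=r\partial_r$ it is given by (\ref{eq:J-L}), so its $(1,0)$ tangent bundle is spanned by $\bZ=\tfrac12(\bX-i\bY)$ and $\bW=\tfrac12(\bT+i\bS)$, with CR part $\calH^\star=\langle\bZ\rangle$ and complementary $(1,0)$ direction $\langle\bW\rangle$; integrability is automatic since the cone is Kähler. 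The second structure $\J_1$ is the restriction to $\frakF_4''$ of the biholomorphism $\frakF_4^\C\cong\C P^1\times(\C\setminus\R)$ of \cite{FP}, \cite{Pla-CR}; I would first make this explicit by defining the map $\calB_1$ in cross-ratio coordinates, the $\C\setminus\R$ factor being carried by $\X_3$ (which is non-real exactly on $\frakF_4^\C$), and then pull back the product complex structure to get $\J_1$, whose integrability is inherited from the product. This already furnishes the two complex structures whose existence the theorem asserts.

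Next I would transport everything to $\frakV_4$, where the CR line is $\calH=\langle Z\rangle$ with $Z$ as in (\ref{eq-CRV}). By Proposition \ref{prop-CR-H} the diffeomorphism $G$ carries $\calH^\star$ to $\calH$ and is CR, so $\J_0$ becomes the complex structure on $\frakV_4$ for which $Z$ is a $(1,0)$ vector. I would then check that $\J_1$ also makes $Z$ a $(1,0)$ vector. Since $\calH$ is one complex-dimensional and strictly pseudoconvex, as witnessed by the positive Levi form $\partial\overline{\partial}F(Z,\overline Z)=4\cos^2 a>0$ computed after (\ref{eq-CRV}), the orientation on $\calH$ is fixed, and both structures must act there as $J\bX=\bY$. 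This proves that the two structures agree on $\calH$, i.e. that the identity map of $\frakF_4''$ is CR.

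The substantive step is the comparison on a complement of $\calH$. I would pick a real vector field $V$ transverse to $\calH$ (for instance the one corresponding to $\bT$, equivalently the $\X_3$-direction) and compute both $\J_0 V$ and $\J_1 V$ modulo $\calH$ in the common frame on $\frakV_4$. The claim to be verified is $\J_1 V\equiv-\J_0 V\pmod{\calH}$: the two structures are \emph{opposite} on the complement, exactly mirroring the statement for $\frakF_4^\C$ recalled in the introduction, namely that the complementary $(1,0)$ direction $\langle\bW\rangle$ of $\J_0$ becomes the $(0,1)$ direction $\langle\overline{\bW}\rangle$ for $\J_1$. Granting this, $\J_0\neq\J_1$ off $\calH$, so the identity map fails to be biholomorphic, and the proof is complete.

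The main obstacle is precisely this last computation. It requires pushing the product complex structure through the chain $\calB_1=(\text{\cite{FP} identification})\circ(\frakF_4''\hookrightarrow\frakF_4^\C)$ and expressing $\J_1$ in the same frame in which $\J_0$ is given by (\ref{eq:J-L}); matching the two frames and confirming that the discrepancy on the complement is exactly a change of sign — rather than some other endomorphism of the complement — is where the genuine work lies. A convenient bookkeeping device would be to phrase the comparison entirely in terms of $(1,0)$ bundles, showing that $\J_0$ and $\J_1$ share the line $\langle\bZ\rangle$ while their complementary $(1,0)$ lines are complex conjugate, which is the cleanest way to read off both the CR-equivalence and the failure of biholomorphy.
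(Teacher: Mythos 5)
Your overall strategy is the same as the paper's: make $\calB_1$ explicit, form the transition map $F=\calB_1\circ\calB_0^{-1}$, show $F_*(\bZ)$ is of type $(1,0)$ to get the CR statement, and show the transverse direction is not respected to rule out biholomorphy. But the proposal stops short of a proof at both decisive points. First, the argument you offer for agreement on $\calH$ --- that strict pseudoconvexity fixes the orientation of $\calH$, so ``both structures must act there as $J\bX=\bY$'' --- is not valid: a complex structure on a rank-two real bundle is \emph{not} determined by the orientation it induces (conjugating $J\mapsto AJA^{-1}$ by an orientation-preserving $A$ that does not commute with $J$ yields a different compatible structure). What is actually required, and what the paper computes, is that for
$F(z,t,r)=\left(z,\;\frac{|z|^2-it}{1-i\log r}\right)$
one has
$$
F_*(\bZ)=\zeta\frac{\partial}{\partial\zeta}-2i\,\frac{|\zeta|^2\Im(w)}{\overline{w}-|\zeta|^2}\,\frac{\partial}{\partial w},
$$
i.e.\ the $\partial/\partial\overline{\zeta}$ and $\partial/\partial\overline{w}$ components vanish; only this establishes that $\J_1$ preserves $\calH$ and agrees there with $\J_0$.

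Second, the comparison on the complement of $\calH$ --- which you explicitly defer as ``where the genuine work lies'' --- is precisely the content of the theorem's second assertion, so the proposal as it stands has not proved it. The paper settles it by the direct computation
$$
F_*(\bW)=-\Im\left(\zeta\frac{\partial}{\partial\zeta}+i\,\frac{w\Im(w)}{w-|\zeta|^2}\,\frac{\partial}{\partial w}\right),
$$
a real (hence not $(1,0)$) vector, so $F$ is not holomorphic. Note also that your announced target, $\J_1V\equiv-\J_0V\pmod{\calH}$ (the two structures ``opposite'' on the complement), is stronger than what is needed and stronger than what this computation gives; that opposition statement is the one quoted from \cite{Pla-CR} for $\frakF_4^\C$, whereas here it suffices to show the structures differ. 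Finally, routing $\calB_1$ through the cross-ratio coordinate $\X_3$ adds an unnecessary layer: the paper defines $\calB_1$ directly in the normalised Heisenberg coordinates by $[\fp]\mapsto\left(z,\;\frac{|z|^2-it}{1-i\tan a}\right)$, which is what makes both computations above tractable.
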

\begin{proof}
Let $\calB_0:\frakF''_4\to\calC'(\frakH^\star)$ be the bijective map of Theorem \ref{thm-main}. There also exists a bijection $\calB_1:\frakF''_4\to\C_*\times(\C\setminus\R)$; this can be defined as follows.
If $\fp\in\frakF''_4$, we normalise so that 
$$
p_1=(1,\tan a),\quad p_2=\infty,\quad p_3=(0,0),\quad p_4=(z,t),
$$
where $a\neq\pm\pi/2$, $z\neq 0$ and also $\tan a\neq t/|z|^2$. We let $\calB_1$ be given by
$$
[\fp]\mapsto \left(z,\;\frac{|z|^2-it}{1-i\tan a}\right).
$$
This map is a bijection: Given $(\zeta,w)\in\C_{*}\times(\C\setminus\R)$,
we set
$$
z=\zeta,\quad \tan a=\frac{|\zeta|^2-\Re(w)}{\Im(w)},\quad t=\frac{\Re(w)|\zeta|^2-|w|^2}{\Im(w)}.
$$
To conclude the proof, let $F=\calB_1\circ \calB_0^{-1}:\calC'(\frakH^\star)\to\C_*\times(\C\setminus\R)$ be given by
$$
F(z,t,r)=\left(z,\;\frac{|z|^2-it}{1-i\log r}\right),\quad (z,t,r)\in\calC'(\frakH^\star).
$$
We will show that $F$ is CR with respect to $\calH^\star=\{\bX,\bY\}$, i.e. $F_{*}(\bZ)\in{\rm T}^{(1,0)}(\C_*\times(\C\setminus\R))$.

One can know that $F$ is bijective, because
$$
F^{-1}(\zeta,w)=\left(\zeta,\;\frac{\Re(w)|\zeta|^2-|w|^2}{\Im(w)},\;e^{\frac{|\zeta|^2-\Re(w)}{\Im(w)}}\right),\quad(\zeta,w)\in\C_*\times(\C\setminus\R).
$$
Thus $F$ is a diffeomorphism and also 
\begin{eqnarray*}
F_*(\bZ)&=&\bZ(z)\frac{\partial}{\partial\zeta}+
\bZ(\overline{z})\frac{\partial}{\partial\overline{\zeta}}+
\bZ\left(\frac{|z|^2-it}{1-i\log r}\right)\frac{\partial}{\partial w}+
\bZ\left(\frac{|z|^2+it}{1+i\log r}\right)\frac{\partial}{\partial \overline{w}}\\
&=&\zeta\frac{\partial}{\partial\zeta}-2i\frac{|\zeta|^2\Im(w)}{\overline{w}-|\zeta|^2}\frac{\partial}{\partial w},
\end{eqnarray*}
which proves our claim about the CR equivalence. Now,
\begin{eqnarray*}
F_*(\bW)&=&\bW(z)\frac{\partial}{\partial\zeta}+
\bW(\overline{z})\frac{\partial}{\partial\overline{\zeta}}+
\bW\left(\frac{|z|^2-it}{1-i\log r}\right)\frac{\partial}{\partial w}+
\bW\left(\frac{|z|^2+it}{1+i\log r}\right)\frac{\partial}{\partial \overline{w}}\\
&=&-\Im\left(\zeta\frac{\partial}{\partial\zeta}+i\frac{w\Im(w)}{w-|\zeta|^2}\frac{\partial}{\partial w}\right)
\end{eqnarray*}
and thus the two complex structures are not biholomorphic.
\end{proof}

\textbf{Acknowledgement.}  Part of this work has been carried out while IDP was visiting Hunan University, Changsha, PRC. Hospitality is gratefully appreciated. We thank the refree for his/her valuable comments.

\end{document}